\theoremstyle{definition}
\newtheorem{defn}[equation]{Definition}
\theoremstyle{plain}
\newtheorem{thm}[equation]{Theorem}
\newtheorem{prop}[equation]{Proposition}
\newtheorem{fact}[equation]{Fact}
\newtheorem{cor}[equation]{Corollary}
\newtheorem{lem}[equation]{Lemma}
\theoremstyle{remark}
\newtheorem{rem}[equation]{Remark}
\newtheorem{ex}[equation]{Example}
\newcommand{\Z}{\mathbb{Z}}
\newcommand{\Q}{\mathbb{Q}}
\newcommand{\N}{\mathbb{N}}
\newcommand{\R}{\mathbb{R}}
\newcommand{\C}{\mathbb{C}}
\newcommand{\B}{\mathbb{B}}
\newcommand{\del}{\partial}
\begin{document}

\title[A lattice version of the Atiyah-Singer index theorem]{A lattice version of the Atiyah-Singer index theorem}
\author[M. Yamashita]{Mayuko Yamashita}
\address{Research Institute for Mathematical Sciences, Kyoto University, 
606-8502, Kyoto, Japan}
\email{mayuko@kurims.kyoto-u.ac.jp}
\subjclass[]{}
\maketitle

\begin{abstract}
   We formulate and prove a lattice version of the Atiyah-Singer index theorem. 
   The main theorem gives a $K$-theoretic formula for an index-type invariant of operators on lattice approximations of closed integral affine manifolds. 
   We apply the main theorem to an index problem of Wilson-Dirac operators in lattice gauge theory.  
\end{abstract}

\tableofcontents

\section{Introduction}
In this paper we formulate and prove a lattice version of the Atiyah-Singer index theorem. 
Given a closed integral affine manifold, the main theorem gives a $K$-theoretic formula for an index-type invariant of operators on the lattice approximation of the manifold. 
This work is motivated from lattice gauge theory. 
We apply the main theorem to the index problem of Wilson-Dirac operator in lattice gauge theory, and prove relations between certain index-type invariants of Wilson-Dirac operators with the Fredholm index of twisted spin Dirac operators in the continuum limit.

First, let me explain the motivation from lattice gauge theory. 
In lattice gauge theory, manifolds, typically the $n$-dimensional torus $B:= T^n = (\R /\Z)^n$, are approximated by the set of level-$k$ lattice points $B_k := (\frac{1}{k}\Z /\Z)^n$. 
When they are interested in a differential operator $D^{\mathrm{conti}}$ on $B$, they construct its lattice counterparts $\{D^{\mathrm{lat}}_k\}_{k \in \N}$ on $B_k$'s, which is a family of operators on finite dimensional Hilbert spaces. 
One expects to recover information of the continuum operator $D^{\mathrm{conti}}$ from information of $\{D^{\mathrm{lat}}_k\}_{k \in \N}$. 
In this paper, we are interested in the Fredholm indices of elliptic operators on $B$, which describes the anomaly in physics. 

The typical setting is the following. 
Let $B= T^n$ with $n$ even, and $D^{\mathrm{conti}} \colon L^2(B; S \otimes F) \to L^2(B; S \otimes F)$ be the spin Dirac operator twisted by a hermitian vector bundle $F$ with a unitary connection. 
We are interested in its Fredholm index, $\mathrm{Ind}(D^{\mathrm{conti}})$. 
The first problem is how to construct a family of lattice operators $\{D^{\mathrm{lat}}_k\}_{k \in \N}$ which remembers the index, and what kind of invariant we consider for this family. 
This question is highly nontrivial; it turns out that the naive approximation does not work. 
Moreover, for example, the Fredholm indices of operators on finite dimensional vector spaces are not interesting. 
For this problem, one answer known in lattice gauge theory is to use the operators called the {\it Wilson-Dirac operators} $\{D^{\mathrm{lat}}_{ k} + \gamma W_k\}_k$, self-adjoint operators acting on $l^2(B_k; (S\otimes F)|_{B_k})$, and to count the dimensions of their positive eigenspaces. 
The relation between the spectrum of Wilson-Dirac operators and the Fredholm index of the continuum Dirac operator is predicted physically by Hasenfratz, Laliena and Niedermayer \cite{HLN1998}, and verified mathematically by Adams \cite{Adams2001} (there have been many related works, for example see \cite{IIY1987}, \cite{KY1999} and \cite{Suzuki1999}). 
Adams \cite{Adams2001} showed that (the author works in the case $n = 4$, but the method extends to arbitrary positive even integer $n$), for $m \in \R \setminus \{0, 2, 4, \cdots, 2n\}$, we have
\begin{align}\label{eq_intro_DWprob}
  \mathrm{rank} \left(E_{>0}\left(D^{\mathrm{lat}}_{k} + \gamma(W_k + mk)\right)\right) - \frac{1}{2} \dim l^2(B_k; (S \otimes E)|_{B_k}) \xrightarrow{k \to \infty}
  I_n(m)\mathrm{Ind}(D^{\mathrm{conti}}). 
\end{align}
Here the integer $I_n(m) \in \Z$ is defined in Definition \ref{def_coeff}; in particular we have $I_n(m) = 1$ for $0<m<2$. 
The first term of \eqref{eq_intro_DWprob} is the dimension of positive eigenspaces of the operator $D^{\mathrm{lat}}_{W, k} + m\gamma$, where $\gamma$ is the $\Z_2$-grading operator on $S\otimes F$. 
The proof uses analysis of the local index density, known as Fujikawa's method. 

This work started from the following question: {\it Can we understand the convergence \eqref{eq_intro_DWprob} conceptually and topologically?}
Recall that, on the continuum side, we know that the Fredholm index is a topological quantity, by the celebrated Atiyah-Singer index theorem \cite{AS1968}. 
\begin{thm}[{The Atiyah-Singer index theorem, \cite{AS1968}}]\label{thm_AS}
Given a closed manifold $M$ and an elliptic pseudodifferential operator $D^{\mathrm{conti}}$ on $M$, we have
\begin{align*}
      \mathrm{Ind}(D^{\mathrm{conti}})
      &= \pi_![\sigma(D^{\mathrm{conti}})]\\
      &= \int_{T^*M}ch(\sigma(D^{\mathrm{conti}}))td(\omega). 
\end{align*}
Here $[\sigma(D^{\mathrm{conti}})] \in K^0(T^*M)$ is the principal symbol class of $D^{\mathrm{conti}}$, and $\pi_! \colon K^0(T^*M) \to K^0(pt)$ is the spin$^c$-pushforward map with respect to the canonical symplectic structure $\omega$ on $T^*M$.  
\end{thm}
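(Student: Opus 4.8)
The plan is to follow the original $K$-theoretic strategy of \cite{AS1968}: establish the equality of the analytic index $\mathrm{Ind}$ with the topological index $\pi_!$ as homomorphisms out of $K^0(T^*M)$, and then translate the resulting $K$-theoretic identity into the cohomological integral via the Chern character. First I would note that both sides depend on $D^{\mathrm{conti}}$ only through the symbol class $[\sigma(D^{\mathrm{conti}})]$: on the analytic side this uses that an elliptic pseudodifferential operator is Fredholm on Sobolev spaces, that its index is a homotopy invariant of the symbol, and that it is stable, so that $\mathrm{Ind}$ descends to a homomorphism $K^0(T^*M) \to K^0(pt) = \Z$; on the topological side it is built into the definition of $\pi_!$. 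The structural point is then to exhibit $\mathrm{Ind}$ and $\pi_!$ as natural transformations of the functor $X \mapsto K^0(T^*X)$ (equivalently, of compactly supported $K$-theory of cotangent bundles), compatible with open inclusions and with exterior products, and to invoke the axiomatic uniqueness: any such natural transformation restricting to the identity $\Z \to \Z$ over $X = pt$ is unique.

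Next I would carry out the reduction to Euclidean space. Choosing an embedding $\iota \colon M \hookrightarrow \R^N$ and identifying a tubular neighbourhood with the normal bundle, one obtains a Thom--Gysin pushforward $\iota_! \colon K^0(T^*M) \to K^0_c(T^*\R^N) \cong K^0_c(\R^{2N}) \cong \Z$, the last isomorphism being Bott periodicity; the topological index $\pi_!$ is by definition this composite. The crux is to show that the analytic index factors in the same way, i.e.\ that $\mathrm{Ind}$ is unchanged when an operator on $M$ is replaced by the operator on $\R^N$ whose symbol is the Thom pushforward of $\sigma(D^{\mathrm{conti}})$, and that on $\R^{2N}$ the analytic index realizes the Bott isomorphism. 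This compatibility of the analytic index with the Thom isomorphism along the normal bundle is the step I expect to be the main obstacle, since it is where genuine analysis enters; in \cite{AS1968} it is handled through the multiplicative property of the index for fibrations together with the analytic computation of the index of the Bott element.

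Finally I would translate the $K$-theoretic formula $\mathrm{Ind}(D^{\mathrm{conti}}) = \pi_![\sigma(D^{\mathrm{conti}})]$ into cohomology. Applying the Chern character and the Atiyah--Hirzebruch Riemann--Roch theorem, the $K$-theoretic Gysin map $\pi_! \colon K^0(T^*M) \to K^0(pt)$ corresponds under $\mathrm{ch}$ to fibre integration over $T^*M \to pt$, twisted by the Todd class of the tangent bundle along the fibres; for the canonical almost complex structure on $T^*M$ determined by the symplectic form $\omega$ this twist is precisely $td(\omega)$. Chasing the symbol class through the commutative diagram relating the $K$-theoretic and cohomological pushforwards then yields
\begin{align*}
  \mathrm{Ind}(D^{\mathrm{conti}}) = \int_{T^*M} ch(\sigma(D^{\mathrm{conti}}))\, td(\omega),
\end{align*}
which is the second displayed equality and completes the proof.
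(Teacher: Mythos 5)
The paper does not prove this theorem: it is quoted as a classical result and attributed entirely to \cite{AS1968}, so there is no internal proof to compare yours against. Your outline is a faithful summary of the standard $K$-theoretic argument from that reference --- the axiomatic characterization of the index (normalization at a point, excision for open inclusions, multiplicativity/compatibility with the Thom isomorphism), the reduction via an embedding $M \hookrightarrow \R^N$ and Bott periodicity, and the final passage to cohomology through the Chern character and the Todd-class correction to the Gysin map --- and the structure is correct, including your identification of the Todd twist with $td(\omega)$ for the canonical almost complex structure on $T^*M$. As you acknowledge, what you have written is a roadmap rather than a proof: the genuine content lies in verifying that the analytic index satisfies the multiplicativity axiom and in the analytic computation of the index of the Bott element on $\R^{2N}$, neither of which is carried out here. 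For the purposes of this paper that is immaterial, since the theorem is used only as a black box in Section \ref{sec_application} (via \eqref{eq_AS_application}).
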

This leads us to the following problem: {\it Can we find a corresponding topological formula for the index-type invariant (e.g., the one appearing in \eqref{eq_intro_DWprob}), for operators on lattices?}
Such a theorem should be a lattice counterpart of the Atiyah-Singer index theorem. 
Then, the next problem is, {\it Apply the theorem to show the convergence \eqref{eq_intro_DWprob}}. 
This paper answers these problems. 

Now let me explain the main result. 
The setting is the following. 
Let $B$ be a closed integral affine manifold (for example $T^n$). 
The set of level-$k$ lattice points on $B$ (i.e., $\frac{1}{k}\Z^n$ in the local integral affine coordinates) is denoted by $B_k$. 
Write $\Lambda^* \subset T^*B$ the associated lattice subbundle. 
In this setting, our result computes the behavior of dimensions of positive eigenspaces for a certain class of families of self-adjoint operators on $\{B_k\}_k$, in terms of the $K$-theory class of their ``lattice version of symbols'', which is a function on the torus bundle $T^*B / (2\pi \Lambda^*)$.  

This ``lattice version of correspondence between operators and symbols'' is the one constructed in the previous paper of the author \cite{Yamashita2020}. 
Applied to our setting of the Lagrangian torus bundle $ T^*B / (2\pi \Lambda^*) \to B$, it produces a family of linear maps $\{\phi^k\}_k$, 
\begin{align*}
    \phi^k \colon C^\infty(T^*B / (2\pi \Lambda^*)) \to \mathrm{End}(l^2(B_k)). 
\end{align*}
This gives a strict deformation quantization of $X$, which we call the {\it Bohr-Sommerfeld deformation quantization} in this paper. 
This construction is an analogue of symbol-operator correspondence, as explained in \cite{Yamashita2020} and also recalled in subsection \ref{subsubsec_DQ_construction} below. 
Given an element $f \in C^\infty(T^*B / (2\pi \Lambda^*))$, the family of operators $\{\phi^k(f)\}_k$ on $\{B_k\}_k$ should be regarded as the operator realization of $f$, and the function $f$ is regarded as the lattice version of symbols of $\{\phi^k(f)\}_k$. 
These maps extends to matrix algebras canonically, and we continue to use the same notations. 

The lattice version of the Atiyah-Singer index theorem, our main theorem Theorem \ref{thm_lat_ind_inv.y}, is the following. 
Given an invertible and self-adjoint element $f \in M_N(C^\infty(T^*B / (2\pi \Lambda^*)))$, the element $(f(f^*f)^{-1/2}+1)/2$ is a projection\footnote{Here $M_N(A) := A \otimes M_N(\C)$ denotes the $N \times N$-matrix algebra for a $\C$-algebra $A$. }.  
Let us denote the corresponding $K^0$-theory class by $[f] \in K^0(T^*B / (2\pi \Lambda^*))$ (See Subsection \ref{subsec_Ktheory} for our conventions on $K$-theory). 

\theoremstyle{plain}
\newtheorem*{intro1}{Theorem \ref{thm_lat_ind_inv.y}}
\begin{intro1}[The lattice index theorem]
Fix a positive integer $N$. 
Suppose we are given an invertible self-adjoint element $f \in M_N(C^\infty(T^*B / (2\pi \Lambda^*)))$. 
Then there exists a positive integer $K$ such that, for all integer $k > K$, we have
\begin{align*}
  \mathrm{rank} \left(E_{>0}\left(\phi^k(f)\right)\right) 
  &= \pi_{!}\left([L]^{\otimes k}\otimes [f]\right)\\
  &=  (2\pi\sqrt{-1})^{-\dim B}\int_{T^*B/(2\pi \Lambda^*)} ch(f)td(\omega)e^{\sqrt{-1}k\omega}. 
\end{align*}
Here $\pi_! \colon K^0(T^*B/(2\pi \Lambda^*)) \to K^0(pt)$ is the spin$^c$-pushforward map with respect to the canonical symplectic structure $\omega$ on $T^*B/(2\pi \Lambda^*)$, and 
$[L]$ is the class of prequantum line bundle of $T^*B/(2\pi \Lambda^*)$ which is used in the construction of $\phi^k$. 
\end{intro1}

The main idea for the proof of Theorem \ref{thm_lat_ind_inv.y} is to apply the algebraic index theorem by Nest and Tsygan \cite{NestTsygan1995a} to the Bohr-Sommerfeld deformation quantization. 
Recall that, on the continuum side, deformation quantization and the index theorem are deeply related. 
Given a manifold $M$, the algebra of pseudodifferential operators on $M$ gives a deformation quantization for $T^*M$. 
As skeched in the introduction of \cite{NestTsygan1995a}, the Atiyah-Singer index theorem essentially (though not directly) follows from the algebraic index theorem applied to this deformation quantization. 
Our proof for Theorem \ref{thm_lat_ind_inv.y} is the lattice analogue of this picture. 
The Bohr-Sommerfeld deformation quantization for $T^*B /(2\pi \Lambda^*)$, which is a {\it strict} deformation quantization, induces a {\it formal} deformation quantization (in fact this is simply the standard Moyal-Weyl star product). 
After checking that we are in the appropriate setting, the proof is a direct application of the algebraic index theorem.  

\if0
Finally, let me briefly explain how to apply the main theorem to the index problem of Wilson-Dirac operators \eqref{eq_intro_DWprob}. 
The proof is a combination of our lattice index theorem and the Atiyah-Singer index theorem. 
The key point is the following commutative diagram. 
\begin{equation}\label{diag_intro}
    \xymatrix{
	K^0(T^*B) \ar[rd]_-{\pi_{T^*B!}}  \ar[rr]^-{i_!}&& K^0(T^*B /(2\pi \Lambda^*)) \ar[ld]^-{\pi_{T^*B/(2\pi \Lambda^*)!}} \\
	& K^0(pt) &
}
\end{equation}
Here $i \colon T^*B \hookrightarrow T^*B/(2\pi \Lambda^*)$ is an open embedding into the zero section of the torus bundle, and $i_!$ is the induced pushforward map. 
The Atiyah-Singer index theorem, Theorem \ref{thm_AS}, concerns about the left down arrow, and the lattice index theorem, Theorem \ref{thm_lat_ind_inv.y}, concerns about the right down arrow. 
We can show that the principal symbol class $[\sigma(D^{\mathrm{conti}}) ]\in K^0(T^*B)$ maps via $i_!$ to a class given in terms of the symbol of the Wilson-Dirac operators (this explains why we need the ``Wilson term''). 
Then, the two index theorems and the commutativity of \eqref{diag_intro} proves the desired convergence \eqref{eq_intro_DWprob}. 
\fi

This paper is organized as follows. 
In Section \ref{sec_preliminaries}, we recall necessary results about the Bohr-Sommerfeld deformation quantization from \cite{Yamashita2020}, and give a brief review of the algebraic index theorem \cite{NestTsygan1995a} and basics on topological $K$-theory. 
In Section \ref{sec_main}, we prove our main result, Theorem \ref{thm_lat_ind_inv.y}. 
In Section \ref{sec_application}, we apply the main theorem to the index problem in lattice gauge theory.
In particular we prove the above convergence \eqref{eq_intro_DWprob} in Theorem \ref{thm_application}.

\subsection{Conventions and notations}
\begin{itemize}
 \item Given a fiber bundle $\mu \colon X \to B$ and a point $b \in B$, we write $X_b := \mu^{-1}(b)$.
    \item For a self-adjoint operator $D$ on a separable Hilbert space and a real number $\lambda$, we denote by $E_{>\lambda}(D)$ the spectral projection of $D$ corresponding to the interval $(\lambda, \infty)$. 
    \item For a Hilbert space $H$, $\mathbb{B}(H)$ denotes the $C^*$-algebra of bounded operators on $H$. 
    \item Given a space $X$ and a vector space $V$, we denote the trivial vector bundle over $X$ with fiber $V$ by $\underline{V} := X \times V$. 
\end{itemize}

\section{Preliminaries}\label{sec_preliminaries}
\subsection{The Bohr-Sommerfeld deformation quantization for cotangent torus bundles}\label{subsec_preliminaries_BS}

First we introduce the main object of this paper. 
\begin{defn}[Integral / tropical affine manifolds]\label{def_affine}
An {\it integral affine structure} (resp. {\it tropical affine structure}) on a smooth $n$-dimensional manifold $B$ is a local coordinate system whose transition functions are given by integral (resp. tropical) affine transformations, i.e., elements in $GL_n(\Z)\ltimes \Z^n$ (resp. $GL_n(\Z)\ltimes \R^n$). 
A manifold equipped with an integral (resp. tropical) affine structure is called an {\it integral affine manifold} (resp. {\it tropical affine manifold}).
\end{defn}
\begin{rem}
There are some variants in the conventions. 
In some literatures, tropical affine manifolds in Definition \ref{def_affine} are called integral affine manifolds. 
\end{rem}

For each $k \in \Z_{> 0}$, integral affine transformations preserve the level-$k$ lattice points $\frac{1}{k}\Z^n$ of $\R^n$. 
Thus, given an integral affine structure on $B^n$, the set of level-$k$ lattice points in each local coordinate glues together to give the set of {\it level-$k$ lattice points} of $B$, which we denote by $B_k \subset B$. 

A tropical affine structure on $B^n$ induces a lattice subbundle $\Lambda$ of its tangent bundle $TB$ (i.e., $\Lambda$ is a fiber bundle over $B$ and $\Lambda_b$ is a subgroup of $T_bB$ isomorphic to $\Z^n$ for all $b \in B$). 
We denote this pair by $(B, \Lambda)$ throughout this paper. 

Given an integral affine manifold $B$, by the results in \cite{Yamashita2020} we get a family of linear maps $\{\phi^k\}_k$, 
\begin{align*}
    \phi^k \colon C^\infty(T^*B / (2\pi \Lambda^*)) \to \mathrm{End}(l^2(B_k)), 
\end{align*}
which plays a role of ``symbol-operator correspondence'' in this paper. 
In the rest of this subsection, we recall the necessary result of \cite{Yamashita2020}. 

\subsubsection{The construction}\label{subsubsec_DQ_construction}
The definition of strict deformation quantizations we use is the following. 

\begin{defn}[Strict deformation quantizations]\label{def_DQ}
Given a symplectic manifold $(X, \omega)$, a {\it strict deformation quantization} consists of the following data. 
\begin{itemize}
    \item A sequence of Hilbert spaces $\{\mathcal{H}_k\}_{k \in \N}$. 
    \item A sequence $\{Q^k\}_{k \in \N}$ of adjoint-preserving linear maps $Q^k \colon C_c^\infty(X) \to \mathbb{B}(\mathcal{H}_k)$ so that for all $f, g \in C_c^\infty(X)$, we have
\begin{enumerate}
    \item $\|Q^k(f)\| \to \|f\|_{C^0}$ as $k \to \infty$, and
    \item $\|[Q^k(f), Q^k(g)] +\frac{\sqrt{-1}}{k}Q^k(\{f, g\})\| =O(\frac{1}{k^2})$ as $k \to \infty$. 
\end{enumerate}
\end{itemize}
\end{defn}

The general setting of \cite{Yamashita2020} is the following. 
Assume that we are given a symplectic manifold $(X, \omega)$ equipped with a prequantum line bundle $(L, \nabla^L)$, and also assume that we are given a proper Lagrangian fiber bundle structure $\mu \colon X \to B$ with connected fibers.
Here, 
\begin{defn}
Let $(X^{2n}, \omega)$ be a symplectic manifold of dimension $2n$. 
\begin{enumerate}
    \item A {\it prequantum line bundle} on $(X, \omega)$ is a hermitian line bundle with unitary connection $(L, \nabla^L)$ which satisfies $(\nabla^L)^2 = -\sqrt{-1}\omega$. 
    \item A regular fiber bundle structure $\mu \colon X^{2n} \to B^n$ is called a {\it Lagrangian fiber bundle} if all the fibers are Lagrangian. 
    It is called {\it proper} if all fibers are compact.
\end{enumerate}
\end{defn}

\begin{ex}
Let $(B^n, \Lambda)$ be an $n$-dimensional tropical affine manifold. 
Consider the cotangent torus bundle $T^*B / (2\pi \Lambda^*)$ over $B$, where $\Lambda^*$ denotes the dual lattice bundle to $\Lambda$. 
We equip $T^*B / (2\pi \Lambda^*)$ with the canonical symplectic structure induced from $T^*B$. 
Then the fiber bundle $\mu \colon T^*B / (2\pi \Lambda^*) \to B$ is a proper Lagrangian fiber bundle with fiber $(\R / (2\pi \Z))^n$. 
By the Arnold-Liouville theorem \cite{Arnold1989}, any proper Lagrangian fiber bundle is locally of this form. 
\end{ex}

Given a symplectic manifold, the exisitence of prequantum line bundle is equivalent to the condition $\omega /(2\pi)\in H^2(X; \Z)$. 
In this settings, the author constructed a strict deformation quantization for $(X, \omega)$. 
In this paper we call it the {\it Bohr-Sommerfeld deformation quantization}. 

The representation spaces $\{\mathcal{H}_k\}_k$, called the {\it quantum Hilbert spaces}, of the strict deformation quantization defined in \cite{Yamashita2020} is the ones given by the geometric quantization associated to the real polarization $\mu$, as we now explain. 
A proper Lagrangian fiber bundle $\mu \colon X \to B$ together with a prequantum line bundle $(L, \nabla^L)$, indues an integral affine structure on the base space $B$, for which the set of level-$k$ lattice points $B_k \subset B$ is given by the set of {\it $k$-Bohr-Sommerfeld points} (see Lemma \ref{lem_aff_Lag}). 
The quantum Hilbert spaces of our deformation quantization is given by a direct sum of one-dimensional Hilbert spaces, associated to each $k$-Bohr-Sommerfeld points, as follows. 

\begin{defn}
Assume we are given a prequantized symplectic manifold $(X, \omega, L, \nabla)$ equipped with a proper Lagrangian fiber bundle structure $\mu \colon X \to B$ with connected fibers. 
Let $k$ be a positive integer. 
\begin{enumerate}
    \item A point $b \in B$ is called a $k$-{\it Bohr-Sommerfeld point} if the space of parallel sections of $(L^k, \nabla^k)|_{X_b}$ is nontrivial.
    \item For each $k$, let $B_k \subset B$ denote the set of $k$-Bohr-Sommerfeld points.
    We define the quantum Hilbert space of level $k$ by
    \begin{align*}
        \mathcal{H}_k = \oplus_{b \in B_k} H^0(X_b; L^k\otimes |\Lambda|^{1/2}X_b),
    \end{align*}
    where $|\Lambda|^{1/2}X_b = |\Lambda|^{1/2} (\ker d\mu)^*|_{X_b}$ is the vertical half-density bundle, equipped with the canonical flat connection, and $H^0(X_b; L^k\otimes |\Lambda|^{1/2}X_b)$ is the one-dimensional Hilbert space of parallel sections of $L^k|_{X_b}\otimes |\Lambda|^{1/2}X_b$ over $X_b$ for each $b \in B_k$. 
\end{enumerate}
\end{defn}

\begin{ex}\label{ex_preq}
For the case $(X, \omega) = (\R^n \times (\R /(2 \pi \Z))^n, {}^t\! dx \wedge d\theta)$ with the projection $\mu \colon X \to \R^n$, we can set $(L, \nabla^L) = (\underline{\C}, d -\sqrt{-1}{}^t\! x  d\theta))$. 
Then we have $B_k = \frac{1}{k}\Z^n$. 

The base $\R^n$ admits a $\Z^n$-action by translation. 
This action lifts to the above prequantum line bundle by 
\begin{align*}
    (x, \theta, v) \mapsto (x+m, \theta, e^{\sqrt{-1}\langle m, \theta \rangle} v), 
\end{align*}
preserving the connection. 
So we get the induced prequantum line bundle on $(\R /\Z)^n \times (\R /(2 \pi \Z))^n$. 
In this case, the set of $k$-Bohr-Sommerfeld point is given by $B_k = (\frac{1}{k}\Z / \Z)^n\subset (\R /\Z)^n$. 
\end{ex}

We have the following relations between integral affine manifolds and Lagrangian fiber bundles. 
\begin{lem}\label{lem_aff_Lag}
\begin{enumerate}
    \item Let $(X^{2n}, \omega)$ be a symplectic manifold of dimension $2n$. 
    A proper Lagrangian fiber bundle structure $\mu \colon X^{2n} \to B^n$ with connected fibers canonically induces a tropical affine structure on $B$.  
    \item The data $(X, \omega, \mu, B)$ as above together with a prequantum line bundle $(L, \nabla)$ of $(X, \omega)$ canonically induce an integral affine structure on $B$ whose level-$k$ lattice points coincide with $k$-Bohr-Sommerfeld points. 
    \item Conversely, given an integral affine manifold $B$, there exists a prequantum line bundle $(L, \nabla)$ of $T^*B / (2\pi\Lambda^*)$ equipped with the canonical symplectic structure, whose $k$-Bohr-Sommerfeld points with respect to the Lagrangian fiber bundle structure $T^*B / (2\pi\Lambda^*) \to B$ coincide with level-$k$ lattice points of $B$. 
    Moreover, $(L, \nabla)$ can be taken so that there is a canonical isomorphism of Hilbert spaces, 
\begin{align}\label{eq_isom_hilb_l2}
    \mathcal{H}_k \simeq l^2(B_k),
\end{align}
for each $k$. 
\end{enumerate}
\end{lem}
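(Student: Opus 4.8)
Parts (1) and (2) are essentially the classical structure theory of Lagrangian torus bundles (Arnold--Liouville plus the Duistermaat-type observation that period $1$-forms are closed), organized so as to track in which group the transition cocycles live; Part (3) is a gluing construction for the prequantum line bundle together with a careful normalization of the fiberwise one-dimensional spaces. I would proceed as follows.

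\textbf{Parts (1) and (2).} Fix $b_0\in B$ and a small ball $U\ni b_0$. Along each fiber $X_b$, the Lagrangian condition together with $d\mu$ gives a pointwise identification $T^*_bB\xrightarrow{\sim}\ker d\mu|_{X_b}$; since $X_b$ is compact these vertical vector fields have complete flows, and integrating them yields an action of $T^*_bB$ on $X_b$ which is locally free of the right dimension, hence transitive (fibers connected) with stabilizer a full-rank lattice $2\pi\Lambda^*_b\subset T^*_bB$. As $b$ varies these assemble into a smooth lattice subbundle $2\pi\Lambda^*\subset T^*B$, defined intrinsically from $(\omega,\mu)$, and a fiberwise diffeomorphism $X|_U\cong (T^*B/(2\pi\Lambda^*))|_U$. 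The classical computation shows the local sections of $\Lambda^*$, regarded as $1$-forms on $U$, are closed; choosing primitives $I_1,\dots,I_n$ of a local frame of $\Lambda^*$ gives action coordinates on $U$, and two such charts differ by a transformation in $GL_n(\Z)\ltimes\R^n$ (the $GL_n(\Z)$ from the change of lattice frame, the $\R^n$ from the ambiguity of primitives). This is the canonical tropical affine structure of (1). For (2), given a prequantum $(L,\nabla)$, since $\omega|_{X_b}=0$ the bundle $(L^k,\nabla^k)|_{X_b}$ is flat; $H^0(X_b;L^k|_{X_b})\ne0$ iff its holonomy character $\Lambda_b\to U(1)$ is trivial, i.e. $b$ is $k$-Bohr--Sommerfeld. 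In an action-angle chart, after a gauge transformation $L$ trivializes with $\nabla=d-\sqrt{-1}\,{}^t\!x\,d\theta$ as in Example \ref{ex_preq}, and the holonomy of $(L^k,\nabla^k)$ along the $j$-th cycle is $e^{2\pi\sqrt{-1}kx_j}$; hence the $k$-Bohr--Sommerfeld locus is $\{x\in\frac1k\Z^n\}$. Taking $k=1$ pins the origin of the action coordinates down to translation by $\Z^n$, refining the structure of (1) to an integral affine structure for which $B_k=\frac1k\Z^n$ locally, i.e. $B_k$ is the level-$k$ lattice set.

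\textbf{Part (3).} Conversely, start from an integral affine manifold $B$ with lattice bundle $\Lambda\subset TB$ and form $X:=T^*B/(2\pi\Lambda^*)\to B$ with its canonical symplectic structure. On each integral-affine chart $U$ we have $X|_U\cong\R^n\times(\R/2\pi\Z)^n$, on which we place the local prequantum datum $(\underline{\C},\,d-\sqrt{-1}\,{}^t\!x\,d\theta)$ of Example \ref{ex_preq}. A transition map of $B$ acts by $x\mapsto Ax+c$ with $A\in GL_n(\Z)$, $c\in\Z^n$, together with the contragredient action on $\theta$; the computation of Example \ref{ex_preq}, which crucially uses $c\in\Z^n$, shows the two local prequantum data are identified by an explicit unitary cocycle extending the translation lift there, so they glue to a global prequantum $(L,\nabla)$ on $X$. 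By the local computation of Part (2), its $k$-Bohr--Sommerfeld locus is the level-$k$ lattice set $B_k$. For the isomorphism \eqref{eq_isom_hilb_l2}: each summand $H^0(X_b;L^k\otimes|\Lambda|^{1/2}X_b)$ is one-dimensional; I trivialize it canonically using, on the $L^k$ factor, the explicit flat generator $e^{\sqrt{-1}k\langle x,\theta\rangle}$ visible in the local model (well-defined since $kx\in\Z^n$), and on the half-density factor the square root of the translation-invariant density of the affine torus $X_b$ normalized by the lattice $2\pi\Lambda_b^*$. One then checks the cocycle constructed above makes these local trivializations compatible, so the identifications $H^0(X_b;\cdots)\cong\C$ assemble into a unitary $\mathcal{H}_k=\bigoplus_{b\in B_k}H^0(X_b;\cdots)\cong\bigoplus_{b\in B_k}\C=l^2(B_k)$.

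\textbf{Main obstacle.} The content of (1)--(2) is classical once the bookkeeping of transition groups is done; the genuine work is in (3), and specifically in producing the \emph{canonical} unitary \eqref{eq_isom_hilb_l2} rather than one defined only up to a fiberwise $U(1)$-ambiguity. This requires a coherent choice, over all charts and all $k$ simultaneously, of generator of the one-dimensional spaces $H^0(X_b;L^k\otimes|\Lambda|^{1/2}X_b)$, and it is precisely here that the half-density twist and the integrality (not merely tropicality) of the affine structure are used: the integral translations $c\in\Z^n$ are exactly what makes the explicit flat generators transform into one another with no residual phase. Verifying this compatibility is the heart of the argument; everything else is routine.
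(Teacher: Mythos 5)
Your proposal is correct and follows essentially the same route as the paper's proof: parts (1) and (2) reduce to the local normal forms (Arnold--Liouville for the Lagrangian fibration, and the local model of Example~\ref{ex_preq} for the prequantum datum, which the paper simply cites from \cite{Yamashita2020} rather than rederiving) together with an analysis of which transition group the local automorphisms land in, and part (3) glues the local prequantum data by the cocycle $(x,\theta,v)\mapsto(Ax+b,{}^t\!A^{-1}\theta,e^{\sqrt{-1}\langle b,{}^t\!A^{-1}\theta\rangle}v)$ and normalizes each $H^0(X_b;L^k\otimes|\Lambda|^{1/2}X_b)$ by the trivialization of $L$ on the zero section. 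Your explicit flat generator $e^{\sqrt{-1}k\langle x,\theta\rangle}(2\pi)^{-n/2}\sqrt{d\theta}$ is exactly the paper's basis $\psi^k_b$ singled out by requiring positive real value at $X_0\cap X_b$, so the two normalizations agree.
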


\begin{proof}
(1) is the direct consequence of the Arnold-Liouville theorem \cite{Arnold1989}. 
Namely, any proper Lagrangian fiber bundle with connected fibers is locally isomorphic to the standard example $(\R^n \times (\R /(2 \pi \Z))^n, {}^t\! dx \wedge d\theta)\to \R^n$. 
Any local automorphism of this local coordinate $(x, \theta)$ induces a tropical affine transformation on the base coordinate $x$. 
Thus, by choosing arbitrary local isomorphisms with the standard example, we get the desired tropical affine structure on $B$, which is independent of the choice of local isomorphisms. 

For (2), use the fact that any prequantum line bundle on a proper Lagrangian fiber bundle of dimension $2n$ with connected fibers is locally isomorphic to the one in Example \ref{ex_preq} (see the proof of \cite[Lemma 2.8]{Yamashita2020}). 
Any local automorphism of the standard example induces an integral affine transformation on the base $\R^n$, and the Bohr-Sommerfeld points coincide with the level-$k$ lattice points in each local coordinate.  
Thus, by choosing arbitrary local isomorphisms with the standard example, we get the desired integral affine structure on $B$, which is independent of the choice of local isomorphisms. 

For (3), assume we are given an $n$-dimensional integral affine manifold $(B^n, \Lambda)$ with a local coordinate system $(\{U_i\}_i, \{\varphi_{ij}\}_{i, j})$, with $U_i \subset \R^n$ and $\varphi_{ij} \in GL_n(\Z)\ltimes \Z^n$. 
On each $U_i$ we have $T^*U_i / (2\pi\Lambda^*) = U_i \times (\R /(2 \pi \Z))^n$ and the canonical symplectic structure is the standard one ${}^t\! dx \wedge d\theta$. 
Let us equip $T^*U_i / (2\pi\Lambda^*)$ with the standard prequantum line bundle $(L_i := \underline{\C}, \nabla_i =d -\sqrt{-1}{}^t\! x  d\theta)$ in Example \ref{ex_preq}. 
The coordinate transformation $\varphi_{ij} = (x \mapsto Ax + b)$ lifts to the isomorphism $\tilde{\varphi}_{ij} \colon (L_j, \nabla_j) \simeq (L_i, \nabla_i)$ by
\begin{align*}
    (x, \theta, v) \mapsto (Ax + b, {}^t\!A^{-1}\theta, e^{\sqrt{-1}\langle b, {}^t\!A^{-1}\theta\rangle} v). 
\end{align*}
It is easy to see that $\tilde{\varphi}_{ij} \circ \tilde{\varphi}_{jk}=\tilde{\varphi}_{ik}$, so the prequantum line bundles $\{(L_i, \nabla_i)\}_{i, j}$ glue together to give a line bundle $(L, \nabla)$ on $T^*B / (2\pi\Lambda^*)$. 
By construction the set of $k$-Bohr-Sommerfeld points of it coincides with the set of level-$k$ lattice points of $B$. 
For the isomorphism \eqref{eq_isom_hilb_l2}, 
note that $(L, \nabla)$ constructed above is equipped with a trivialization on the zero section $X_0 \subset X$ by the construction. 
This trivialization gives the canonical orthonormal basis $\{\psi^k_b\}_{b \in B_k}$ of $\mathcal{H}_k$ by requiring that each $\psi^k_b \in H^0(X_b; L^k\otimes |\Lambda|^{1/2}X_b)$ takes the positive real value at the point $X_0 \cap X_b$.
This gives the canonical isomorphism \eqref{eq_isom_hilb_l2}.   
\end{proof}

In the rest of this subsection, we assume that $X$ is of the form $X = T^*B / (2\pi \Lambda^*)$ for an integral affine manifold $B$.  
Restricted to this setting, the construction of the strict deformation quantization simplifies, and described as follows. 
Take a prequantum line bundle $(L, \nabla)$ for $X$ such that the set of Bohr-Sommerfeld points coincides with the lattice points of $B$. 
The existence of such $(L, \nabla)$ is guaranteed by Lemma \ref{lem_aff_Lag} (3). 

\begin{rem}
Actually, in the constructions below, as well as in our main theorem, we do not need to assume that $(L, \nabla^L)$ is equipped with an isomorphism \eqref{eq_isom_hilb_l2}. 
However, when we apply our result to problems on operators on lattices, as in Section \ref{sec_application} below, we start from operators on $l^2(B_k)$. 
In such a situation, Lemma \ref{lem_aff_Lag} (3) guarantees the existence of an appropriate choice of $(L, \nabla^L)$. 
\end{rem}

In \cite[Definition 3.2]{Yamashita2020}, we constructed linear maps
\begin{align*}
    \phi^k_{H, \mathcal{U}} \colon C_c^\infty(X) \to \mathbb{B}(\mathcal{H}_k), 
\end{align*}
and showed that indeed this gives a strict deformation quantization (\cite[Theorem 3.32]{Yamashita2020}). 
Here, the additional datum $(H, \mathcal{U})$ were necessary: 
$H \subset TX$ is a choice of horizontal distribution with respect to $\mu$, and $\mathcal{U}$ is an open covering of $B$, which satisfy some conditions ((H) and (U) in \cite[subsection 3.2]{Yamashita2020}). 

In our setting here, we have a canonical choice of $H$, coming from the canonical splitting $TX = \mu^*TB \oplus \mu^* T^*B$. 
In this paper we always use this splitting to define the strict deformation map, so we omit the reference to $H$ in the notation. 
On the other hand, the choice of $\mathcal{U}$ is only technical (just needed to patch local constructions together), and the different choice of $\mathcal{U}$ yields essentially the same deformation quantization (\cite[Proposition 3.35]{Yamashita2020}). 
Since our result in this paper does not depend on this choice, we fix such an open covering $\mathcal{U}$ arbitrarily first, and also omit from the notation
\footnote{
The essential points of the condition (U) imposed on the open covering $\mathcal{U}$ is that, each element $U \in \mathcal{U}$ admits an integral affine open embedding into $\R^n$ whose image is relatively compact and convex, and for each pair of elements $U, V \in \mathcal{U}$, the image of the affine embedding in $\R^n$ of their intersection $U \cap V$ is also convex (in particular connected).  
This condition allows us to, given two points $b, c \in B$ which are {\it close} (i.e., contained in some common element in $\mathcal{U}$), find a unique affine linear path from $b$ to $c$ contained in some element in $\mathcal{U}$. 
}. 

We regard the quantization maps $\phi^k \colon C^\infty_c(T^*B/(2\pi \Lambda^*)) \to \B(\mathcal{H}_k)$ in our setting as {\it a lattice version of the correspondence between symbols and operators}.
The idea of this construction is the fiberwise Fourier expansion of functions on the cotangent torus bundle. 
We recall the rigorous definition first, and explain this idea after that. 

Given a path $\gamma$ in $B$ from $b\in B$ to $c \in B$, the restriction of the cotangent lattice bundle to $\gamma$, $\Lambda^*|_{\gamma}$, is trivial. 
So we get the parallel transform 
\begin{align}\label{eq_para.y}
    T_\gamma \colon X_b \xrightarrow{\simeq} X_c. 
\end{align}
Also the connection $\nabla^L$ on $L$ and the canonical flat connection on $|\Lambda|^{1/2}(\ker d\mu)^*$ gives the parallel transform
\begin{align*}
    T_\gamma \colon L^k|_{X_b} \otimes |\Lambda|^{1/2}X_b \to L^k|_{X_c} \otimes |\Lambda|^{1/2}X_c 
\end{align*}
which covers \eqref{eq_para.y}. 
We use the same notation for the parallel transform. 
This allows us to define a pairing between sections $\xi_b^k \in C^\infty(X_b; L^k \otimes |\Lambda|^{1/2}X_b)$ and $\xi_c^k \in C^\infty(X_c; L^k \otimes |\Lambda|^{1/2}X_c)$, denoted by $\langle \xi_b^k, \xi_c^k \rangle_\gamma$. 

We say that two points $b, c \in B$ are {\it close} if there exists an element $U \in \mathcal{U}$ such that $b, c \in U$. 
For such $b, c \in B$, by the condition (U) imposed on $\mathcal{U}$ (see \cite[subsection 3.2]{Yamashita2020}) we can take the unique affine linear path $\gamma$ from $b$ to $c$ in $U$ and define, for sections $\xi_b^k \in C^\infty(X_b; L^k \otimes |\Lambda|^{1/2}X_b)$ and $\xi_c^k \in C^\infty(X_c; L^k \otimes |\Lambda|^{1/2}X_c)$,
\begin{align*}
    \langle \xi_b^k, \xi_c^k \rangle_\mathcal{U} := \langle \xi_b^k, \xi_c^k \rangle_\gamma. 
\end{align*}
This is well-defined by the condition (U) on $\mathcal{U}$. 

\begin{defn}[The Bohr-Sommerfeld deformation quantization, {\cite[Definition 3.22]{Yamashita2020}}]\label{def_BSquantization}
We define a sequence of adjoint-preserving linear maps $\phi^k \colon C_c^\infty(X) \to \mathbb{B}(\mathcal{H}_k)$ by the following formula. 
For $f \in C_c^\infty(X)$, we define the operator $\phi^k(f)$ by, for $c \in B_k$ and an element $\psi^k_c \in H^0(X_c; L^k\otimes |\Lambda|^{1/2}X_c) \subset \mathcal{H}_k$,  
\begin{align*}
\phi^k(f)(\psi^k_c) := \sum_{b \in B_k, b\mbox{ is close to }c} \langle \psi^k_b, f|_{X_{(b + c)/2}}\psi^k_c \rangle_{\mathcal{U}}  \cdot \psi^k_b, 
\end{align*}
where $\psi^k_b \in H^0(X_b; L^k\otimes |\Lambda|^{1/2}(X_b)) \subset \mathcal{H}_k$
is any element with $\|\psi^k_b\| = 1$. 
Here, we denote by $(b + c)/2 \in B$ the middle point between $b$ and $c$ with respect to the affine structure on an open set $U \in \mathcal{U}$ which contains both $b$ and $c$, and we regard $f|_{X_{(b + c)/2}} \in C^\infty(X_{(b+c)/2})$ as a function on $X_c$ using the parallel transform \eqref{eq_para.y} along the affine linear path between $(b + c)/2$ and $c$ in $U$. 

This construction gives a strict deformation quantization for $(X, \omega)$ (\cite[Theorem 3.32]{Yamashita2020}), and we call it the {\it Bohr-Sommerfeld deformation quantization}. 
\end{defn}

Now we explain that this definition is indeed the fiberwise Fourier expansion. 
Locally on an open subset $U \subset B^n$ which is small enough, we can choose an open embedding $U \hookrightarrow \R^n$ which preserves the integral affine structure, so from now on we explain in the case of $X=T^*\R^n /(2\pi \Lambda^*) = \R^n \times (\R/2\pi \Z)^n$. 

Equip $X$ with the prequantizing line bundle $(L = \underline{\C}, \nabla^L = d -\sqrt{-1} {}^t\!xd\theta)$. 
Up to parallel translation of the base $\R^n$, any choice of $(L, \nabla^L)$ is isomorphic to this canonical one (see the proof of \cite[Lemma 2.8]{Yamashita2020}). 

In this case we have $B_k = \frac{1}{k}\Z^n$. 
The canonical orthonormal basis $\{\psi_b^k\}_{b \in B_k}$ for $\mathcal{H}_k$ in the proof of Lemma \ref{lem_aff_Lag} (3) is given by
\begin{align*}
    \psi_b^k := e^{ \sqrt{-1} k\langle b, \theta \rangle}(2\pi)^{-n/2}\sqrt{d\theta} \in \mathcal{H}_k. 
\end{align*}

Assume we are given a function $f \in C_c^\infty(X)$. 
Using the above basis of $\mathcal{H}_k$, the operator $\phi^k(f)$ is identified by a $B_k \times B_k$-matrix $\{K_f(b, c)\}_{b, c \in B_k}$. 
Matrix elements $K_f(b, c)$ for $b, c \in B_k$ is given as follows. 
\begin{align}\label{eq_mat_elem_model.y}
    K_f(b, c) := (2\pi)^{-n}\int_{(\R/2\pi \Z)^n} e^{- \sqrt{-1} k \langle b - c, \theta \rangle} f((b+c)/2, \theta) d\theta. 
\end{align}
In other words, $K_f(b, c)$ is given by the $k(b-c)$-th coefficient in the Fourier expansion of $f((b+c)/2, \theta)$.

\begin{ex}
Assume $f \in C_c^\infty(X)$ is a pullback of a function $f_0 \in C_c^\infty(\R^n)$ on the base $\R^n$, i.e., $f$ does not depend on $\theta$. 
Then $\phi^k(f)$ is just the diagonal multiplication operator by the value of $f_0$ at each point on $B_k$, 
\begin{align*}
    K_f(b, c) = \begin{cases}
    f_0(c) & \mbox{ if } b = c, \\
    0 & \mbox{ otherwise. }
    \end{cases}
\end{align*}
\end{ex}
\begin{ex}\label{ex_concentration}
Assume $f$ can be expressed as $f(x, \theta) = f_m(x) e^{\sqrt{-1}\langle m, \theta \rangle}$ for some $m \in \Z^n$ and a function $f_m \in C^\infty_c(\R^n)$.
Then we have
\begin{align*}
    K_f(b, c) = 
    \begin{cases}
    f_m\left(c + m / (2k)\right) & \mbox{ if } b = c + m/k, \\
    0 & \mbox{ otherwise. }
    \end{cases}
\end{align*}
We see that the function $e^{\sqrt{-1}\langle m, \theta \rangle}$ plays the role of ``$m/k$-shift'', and if we let $k \to \infty$, the matrix elements of this operator concentrate to the diagonal. 
\end{ex}
In fact, the ``concentration to the diagonal'' of the matrix elements of the operator $\phi^k(f)$ as $k\to \infty$ seen in the above examples holds in general, because the Fourier coefficients of smooth function on $(\R/(2\pi\Z))^n$ is rapidly decreasing. 
Basically, this is why we can extend this construction to general Lagrangian fiber bundles by patching the local construction together by $\mathcal{U}$, and the different choice of $\mathcal{U}$ yields essentially the same deformation quantization. 

\subsubsection{The associated star product}

In general, given a strict deformation quantization in the sense of Definition \ref{def_DQ}, one expects that it induces a formal deformation quantization, i.e., a an associative product $\star$ on $C^\infty(X)[[\hbar]]$ which satisfies
\begin{align*}
    f \star 1 &= 1 \star f = f ,\\ 
    f \star g &= fg + O(\hbar), \\
    f \star g - g \star f &= \hbar \{f, g\} + O(\hbar^2), 
\end{align*}
for all $f, g \in C^\infty(X)$. 
We also assume that each coefficients of $\hbar^i$ in the star product $f\star g$ is a differential expression of $f$ and $g$. 
This is possible if we can expand the composition of operators the form $Q^k(f)Q^k(g)$ in a power series of $k^{-1}$, satisfying appropriate conditions. 

In our case (note that we are assuming $X = T^*B / (2\pi \Lambda^*)$), $X$ has the canonical flat torsion-free symplectic connection, so we have the caononical formal deformation quantization of $X$, called the {\it Moyal-Weyl star product} $\star_{MY}$ (see for example \cite{Weinstein1995}). 
Bohr-Sommerfeld deformation quantization indeed induces the Moyal-Weyl star product, i.e., 
informally, we have
\begin{align*}
    \phi^k(f \star_{MY} g) = \phi^k(f) \phi^k(g) \mod O(k^{-\infty}). 
\end{align*}
More precisely the statement is the following. 
Let us denote the standard Moyal-Weyl star product by $\star_{MY}$, and each coefficient by $\mathcal{C}_j$, i.e., 
\begin{align*}
    f \star_{MY} g = \sum_{j = 0}^\infty \hbar^j \mathcal{C}_j(f, g). 
\end{align*}
\begin{prop}[{\cite[Theorem 4.3]{Yamashita2020}}]\label{prop_star_is_comp}
Assume that $X$ is of the form $X = T^*B / (2\pi \Lambda^*)$ for an integral affine manifold $B$, and $X$ is equipped with a prequantum line bundle $(L, \nabla^L)$. 
Then for all $f , g \in C_c^\infty(X)$ and $l \in \N$, 
\begin{align*}
        \left\| \phi^k(f) \phi^k(g) -
        \sum_{j = 0}^l \left( \frac{-\sqrt{-1}}{k}\right)^j \phi^k\left(\mathcal{C}_j(f, g)\right)
        \right\| =O\left(\frac{1}{k^{l+1}}\right)
\end{align*}
as $k \to \infty$. 
\end{prop}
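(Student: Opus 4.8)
The plan is to reduce everything to the local model $X = \R^n \times (\R/2\pi\Z)^n$ and use the explicit formula \eqref{eq_mat_elem_model.y} for the matrix elements of $\phi^k(f)$ in the canonical orthonormal basis $\{\psi^k_b\}_{b\in B_k}$. First I would observe that, by the structure of the construction (patching local pieces together by $\mathcal{U}$), it suffices to prove the estimate for $f,g$ supported in a single chart $U \in \mathcal{U}$; the ``concentration to the diagonal'' of the kernels $K_f(b,c)$ (a consequence of the rapid decay of Fourier coefficients of smooth functions on the torus) ensures that the errors introduced by the patching are $O(k^{-\infty})$, hence absorbed into the right-hand side. So I reduce to the model case, where I can compute $\phi^k(f)\phi^k(g)$ directly: its $(b,c)$-matrix element is $\sum_{a \in \frac1k\Z^n} K_f(b,a)K_g(a,c)$, and by \eqref{eq_mat_elem_model.y} this is a double integral over $((\R/2\pi\Z)^n)^2$ of $e^{-\sqrt{-1}k\langle b-a,\theta\rangle}e^{-\sqrt{-1}k\langle a-c,\eta\rangle}f((b+a)/2,\theta)g((a+c)/2,\eta)$, summed over the lattice variable $a$.

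The core of the argument is then a stationary-phase / Poisson-summation analysis of this lattice sum. Summing over $a \in \frac1k\Z^n$ the factor $e^{\sqrt{-1}k\langle a,\eta-\theta\rangle}$ produces (via Poisson summation) a sum of Dirac masses forcing $\eta - \theta \in 2\pi\Z^n$; modulo $O(k^{-\infty})$ only the $\eta = \theta$ contribution survives because the amplitudes $f,g$ are smooth. Setting $\eta=\theta$, one is left with an oscillatory integral in $\theta$ and a slowly varying lattice-point average in $a$, which one expands by Taylor expansion of $f((b+a)/2,\theta)g((a+c)/2,\theta)$ around $a=(b+c)/2$; the Gaussian-type oscillation in $\theta$ coming from $e^{-\sqrt{-1}k\langle b-c,\theta\rangle}$ together with the shifts realized by $\theta$-derivatives (as in Example \ref{ex_concentration}, where $e^{\sqrt{-1}\langle m,\theta\rangle}$ acts as an $m/k$-shift) reproduces exactly the bidifferential operators $\mathcal{C}_j$ of the Moyal–Weyl product, with the $j$-th term carrying a factor $(-\sqrt{-1}/k)^j$. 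One must check that truncating the Taylor expansion at order $l$ leaves a remainder whose operator norm is $O(k^{-(l+1)})$: this is where one uses that $f,g \in C^\infty_c$, so all derivatives are bounded, together with a Schur-test (Young's inequality for the lattice convolution) bound on the operator norm of an integral operator with rapidly-decaying-off-diagonal kernel — the relevant kernels decay faster than any polynomial in $k\cdot\mathrm{dist}(b,c)$, uniformly, so the $l^2$-operator norms are controlled by $C^0$-norms of finitely many derivatives of $f$ and $g$.

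The main obstacle I anticipate is the bookkeeping in the stationary-phase expansion: matching the combinatorial coefficients so that precisely the Moyal–Weyl bidifferential operators $\mathcal{C}_j$ appear (rather than some other star product equivalent to it) requires care with the midpoint conventions $(b+a)/2$, $(a+c)/2$ built into Definition \ref{def_BSquantization}, and with the interchange of the lattice sum over $a$ and the $\theta,\eta$ integrations. Fortunately this is exactly the computation already carried out in \cite[Theorem 4.3]{Yamashita2020}, so in the write-up I would state the reduction to the model case carefully, recall the key lattice-sum/Poisson-summation lemma, indicate how the midpoint prescription forces the Weyl (symmetric) ordering and hence the Moyal product, and then cite \cite{Yamashita2020} for the detailed verification of the remainder estimate, rather than reproducing it in full.
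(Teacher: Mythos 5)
The paper does not prove Proposition \ref{prop_star_is_comp}; it is quoted verbatim as \cite[Theorem 4.3]{Yamashita2020}, so there is no ``paper's own proof'' here to compare against. Your sketch is a reasonable reconstruction of the kind of argument one would expect, but as written it is an extended gloss rather than a proof, and it has two soft spots. First, the Poisson-summation step is described as producing ``a sum of Dirac masses forcing $\eta - \theta \in 2\pi\Z^n$,'' but the $a$-dependence of the summand is not purely oscillatory: the amplitude $f((b+a)/2,\theta)\,g((a+c)/2,\eta)$ also depends on $a$, so Poisson summation yields shifted Fourier transforms, not distributions supported on $\{\eta=\theta\}$. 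The structurally cleaner route, already suggested by Example \ref{ex_concentration}, is to Fourier-expand $f=\sum_m f_m(x)e^{\sqrt{-1}\langle m,\theta\rangle}$ and $g$ first, so that $\phi^k(f)$ and $\phi^k(g)$ become rapidly convergent sums of ``shift by $m/k$ together with multiplication by $f_m$ evaluated at the midpoint'' operators; composing two such and Taylor-expanding the midpoint corrections in $1/k$ is where the bidifferential operators and the factor $(-\sqrt{-1}/k)^j$ actually emerge. Second, the identification of the resulting coefficients with the Moyal--Weyl $\mathcal{C}_j$ (rather than some gauge-equivalent star product) is the real content of the statement, and your proposal explicitly defers exactly this, together with the operator-norm remainder estimate, to \cite{Yamashita2020}. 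That is a legitimate way to use the literature, and it is what the present paper itself does, but it means the proposal does not constitute an independent proof of the proposition.
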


\subsection{A review of the algebraic index theorem}
In this subsection, we recall the algebraic index theorem by Nest and Tsygan \cite{NestTsygan1995a}, which is the main tool for our proof of the main theorem. 
Here we focus on the case of closed manifolds. 

Let $(X^{2n}, \omega)$ be a closed symplectic manifold of dimension $2n$. 
Suppose we are given a formal deformation quantization $\star$ for $(X, \omega)$. 
Let us denote by $\theta \in H^2(X; \C[[\hbar]])$ the characteristic class of this deformation quantization (\cite[Section 5]{NestTsygan1995a}, \cite{Fedosov1994}). 
Note that we have $\theta = \omega + O(\hbar)$. 

A {\it trace functional}  for $\star$ is a $\C[[\hbar]]$-linear map $\tau \colon C^\infty(X)[[\hbar]] \to \C[\hbar^{-1}, \hbar]]$, which satisfies
\begin{align*}
    \tau(f\star g) = \tau(g \star f)
\end{align*}
for all $f, g \in C^\infty(X)$. 
Trace functionals always exist and they are unique up to multiplication of elements in $\C[\hbar^{-1}, \hbar]]$. 
There is a canonical choice of normalization (\cite[Section 1]{NestTsygan1995a}). 
We denote this trace functional by $\tau$. 
It extends to matrix algebras $M_N(C^\infty(X))$ canonically. 

\begin{rem}
This normalization is determined by the following condition. 
Given a star product $\star$ on $X$, we can find an open set $U \subset X$ small enough, so that there exists an open subset $U_0 \subset \R^{2n}$ with the standard symplectic form $\omega_0$, and an isomorphism $g_U \colon (C^\infty(U)[[\hbar]], \star) \simeq (C^\infty(U_0)[[\hbar]], \star_{MY})$. 
Then, for $f \in C^\infty_c(U)$, we require that
\begin{align}\label{eq_normalization}
\tau(f) = \hbar^{-n}(n!)^{-1}\int_{U_0}g_U(f)\omega_0^n. 
\end{align}
\end{rem}

\begin{rem}\label{rem_trace}
In particular, in our setting where $X = T^*B / (2\pi \Lambda^*)$ for an integral affine manifold $B$ and we are considering the standard Moyal-Weyl star product globally on $X$, the canonical trace functional is simply, 
\begin{align*}
  \tau(f) = \hbar^{-n}(n!)^{-1}\int_{X}f\omega^n  
\end{align*}
for any $f \in C^\infty(X)$. 
\end{rem}

In this situation, the algebraic index theorem by Nest and Tsygan \cite[Theorem 1.1.1]{NestTsygan1995a} states the following. 

\begin{fact}[{The algebraic index theorem, \cite[Theorem 1.1.1]{NestTsygan1995a}}]\label{fact_alg_ind_thm.y}
Fix a positive integer $N$. 
Suppose we are given an idempotent $e \in M_N(C^\infty(X))[[\hbar]]$ with respect to the star product $\star$. 
Let us write
\begin{align*}
    e = e_0 + \hbar e_1 + \hbar^2 e_2 + \cdots,  
\end{align*}
where $e_i \in M_N(C^\infty(X))$. 
Then we have
\begin{align*}
    \tau (e) = \int_X ch(e_0)td(\omega)e^{-c_1(\omega)/2}e^{\theta/ \hbar}. 
\end{align*}
Here, the Chern character $ch(e_0)\in \Omega^{\mathrm{even}}(X)$ of the idempotent $e_0 \in M_N(C^\infty(X))$ is defined by
\begin{align*}
    ch(e_0) := \sum_{m = 0}^n \frac{1}{m!}\mathrm{tr}(e_0(de_0)^{2m}). 
\end{align*}
The classes $td(\omega)$ and $c_1(\omega)$ are the characteristic classes of $X$ with respect to the almost complex structure compatible with $\omega$. 
\end{fact}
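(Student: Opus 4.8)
Since Fact~\ref{fact_alg_ind_thm.y} is the algebraic index theorem of Nest--Tsygan, used below only as an input for Theorem~\ref{thm_lat_ind_inv.y}, I only sketch how one would prove it. The plan is to reinterpret $\tau(e)$ as a pairing in cyclic homology. Write $A_\hbar := (C^\infty(X)[[\hbar]], \star)$ and work in $M_N$ throughout. An idempotent $e$ carries a Chern character $ch(e)$ in periodic cyclic homology $HP_0(A_\hbar)$ whose Hochschild-degree-$0$ component is $\mathrm{tr}(e)$, and the canonical trace $\tau$, being a cyclic $0$-cocycle, extends to a linear functional $[\tau]$ on $HP_0(A_\hbar)$ with
\[
  \tau(e) = \langle [\tau], \, ch(e) \rangle .
\]
So I would reduce the theorem to: (i) computing $HP_\bullet(A_\hbar)$; (ii) identifying the class $[\tau]$; (iii) computing $ch(e)$.

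For (i) and (ii) I would run a Hochschild--Kostant--Rosenberg type computation --- local over Darboux charts, where $A_\hbar$ is the Moyal--Weyl algebra, whose Hochschild and cyclic homology are classical and concentrated in a single degree, and globalized by a Mayer--Vietoris / sheaf argument --- to obtain that $HH_\bullet(A_\hbar)$ is computed by differential forms and, after determining Connes' $B$-operator, that $HP_\bullet(A_\hbar) \cong H^\bullet(X; \C((\hbar)))$ ($2$-periodic). Under this identification the normalization \eqref{eq_normalization} pins down $[\tau]$: in a Darboux chart the canonical trace is $\hbar^{-n}(n!)^{-1}\int(\,\cdot\,)\,\omega_0^n$, which corresponds to (a normalization of) the fundamental class $[X]$, so that pairing with $[\tau]$ is integration over $X$; here the uniqueness of the trace up to $\C[\hbar^{-1},\hbar]]$-scaling is what makes the local pieces glue.

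Step (iii) is the analytic heart. I would realize $A_\hbar$ as the flat sections of a Fedosov connection $D$ on the bundle $\mathcal{W}$ of formal Weyl algebras over $X$, lift $e$ to a $D$-flat section $\hat e$ that is fiberwise an idempotent in $\mathcal{W}$, and express $ch(e)$ through the \emph{fiberwise} trace of $\hat e$ in the Weyl algebra. The ``abstract index theorem for the Weyl algebra'' --- the fiberwise trace, computable in the Bargmann--Fock representation or by a direct computation of the cyclic cohomology of the Weyl algebra --- together with the curvature of $D$ produces, via a Gelfand--Fuks / characteristic-class argument, exactly the factor $td(\omega)e^{-c_1(\omega)/2} = \hat A(TX)$, while the characteristic class $\theta$ of the quantization enters through the explicit form of $D$ and contributes $e^{\theta/\hbar}$. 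Combining, $ch(e) = ch(e_0)\, td(\omega)e^{-c_1(\omega)/2}e^{\theta/\hbar}$ in $HP_0(A_\hbar) \cong H^{\mathrm{even}}(X;\C((\hbar)))$, and pairing with $[\tau]$ gives the stated identity.

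The main obstacle is clearly (iii): it is essentially the whole content of the theorem, and a self-contained proof needs either the formal geometry of the Weyl bundle plus a Lie algebra cohomology computation (in the manner of Nest--Tsygan, Fedosov, or Bressler--Nest--Tsygan) or an appeal to Kontsevich's formality theorem. In this paper none of this is reproved; the only work on our side, done in Section~\ref{sec_main}, is to check the hypotheses --- that the star product induced by the Bohr--Sommerfeld deformation quantization is the Moyal--Weyl product, so $\theta = \omega$ and the canonical trace is as in Remark~\ref{rem_trace}, and that $\phi^k$ relates the strict quantity $\mathrm{rank}\,E_{>0}(\phi^k(f))$ to $\tau$ of the idempotent $(f(f^*f)^{-1/2}+1)/2$.
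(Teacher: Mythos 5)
The paper gives no proof of this statement: it is imported verbatim as a \emph{Fact} from \cite{NestTsygan1995a}, and your sketch is a faithful outline of the Nest--Tsygan argument (pairing the trace, viewed as a cyclic $0$-cocycle, with the Chern character in $HP_0$; computing $HP_\bullet$ of the deformed algebra by localizing to Darboux charts; and extracting the factor $td(\omega)e^{-c_1(\omega)/2}=\hat A(TX)$ and $e^{\theta/\hbar}$ from the Fedosov--Weyl bundle via a Gelfand--Fuks computation). You also correctly identify that the only work actually carried out in this paper is the verification of the hypotheses --- that the Bohr--Sommerfeld quantization induces the Moyal--Weyl product with $\theta=\omega$ and that the canonical trace is realized by $\mathrm{Trace}\circ\phi^k$ --- so your treatment is consistent with the paper's.
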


The Chern character $ch(e_0) \in \Omega^{\mathrm{even}}(X)$ is the Chern character form of the connection $e_0de_0$ of the vector bundle $e_0 \cdot \underline{\C^N}$. 

\subsection{A review on $K$-theory}\label{subsec_Ktheory}
In this subsection, we briefly review basics on topological $K$-theory necessary in this paper. 
There are many nice references for this topic. 
For example see \cite{AS1968}, \cite{LawsonMichelsohn1989} and \cite{Bla1998} for details. 

\subsubsection{Definition and conventions}
In this paper we always use {\it compactly supported $K$-theory} on locally compact topological spaces. 
For a compact space $X$, we define its $K^0$-group $K^0(X)$ to be the Grothendieck group associated to the abelian semigroup of isomorphism classes of complex vector bundles over $X$, with additive structure given by direct sum. 
So an element in $K^0(X)$ is represented as the formal difference $[E] - [F]$ of classes of two complex vector bundles $E$ and $F$ over $X$. 
A continuous map $f \colon X \to Y$ induces a homomorphism $f^* \colon K^0(Y) \to K^0(X)$ by the pullback. 
For possibly noncompact $X$, we define
\begin{align*}
    K^0(X) := \mathrm{ker}\left(K^0(X^+) \to K^0(pt)
    \right), 
\end{align*}
where $X^+$ is the one-point compactification of $X$ and the map is induced by the inclusion of a point.
This means that an element in this group is given by an isomorhism class of a pair of complex vector bundles over $X$ together with an isomorphism between them defined on the complement of a compact subset in $X$. 

Here, it is convenient to introduce $\Z_2$-graded vector bundles. 
In this picture, an element in $K^0(X)$ is given by a homotopy class $[E, \sigma]$, where $E = E_+ \oplus E_-$ is a $\Z_2$-graded complex vector bundle over $X$, and $\sigma \colon E_+ \to E_-$ is a homomorphism which is invertible outside a compact set. 
In particular for compact $X$, the element $[E, \sigma]$ in this picture corresponds to the difference class $[E_+] - [E_-]$. 

In this paper, we also represent classes in $K$-groups in an operator-algebraic way.
Let $X$ be a compact space. 
For a positive integer $N$, 
we say that a matrix-valued function $p \in M_N(C(X))$ is a {\it projection} if it satisfies $p^* = p$ and $p^2 = p$, i.e., its value at each point in $X$ is a self-adjoint idempotent matrix. 
A projection (or just an idempotent) $p \in M_N(C(X))$ determines a class $[p] := [p\underline{\C^N}] \in K^0(X)$. 

We also represent classes in $K$-groups by an invertible and self-adjoint element in $M_N(C(X))$ for compact $X$. 
Given such $u \in M_N(C(X))$, we define $[u] \in K^0(X)$ to be the element represented by the projection $\frac{u(u^*u)^{-1/2} + 1}{2} \in M_N(C(X))$\footnote{
This point of view is generalized and leads us to the notion of $K$-theory for operator algebras, where $K$-groups are defined in terms of projections and unitaries. 
See \cite{Bla1998} for details. 
}. 

To summarize, we represent classes in $K$-groups by vector bundles, projections and self-adjoint invertible elements. 
Which picture we are using will be clear from the context. 

\subsubsection{Bott periodicity and the $K$-theory push-forward}
We define 
\begin{align*}
    K^{-i} (X) := K^0(\R^i \times X) 
\end{align*}
for $i \in \Z_{\ge 0}$. 
We have the {\it Bott periodicity} isomorphism, 
\begin{align*}
    K^0(pt) \simeq K^0(\R^2), 
\end{align*}
which sends the generator $[\C] \in K^0(pt)$ to the {\it Bott element} $[S, \sigma] \in K^0(\R^2)$, 
where $S$ is the complex spinor space in two-dimension (which is $\Z_2$-graded) and $\sigma$ is the Clifford multiplication. 
This gives a functorial isomorphism 
$K^{-i}(X) \simeq K^{-i-2}(X)$ for any $X$. 
Regarding $i$ as an integer mod two, we also define $K^i$ for positive $i$. 

$K$-theory orientation is given by {\it spin$^c$-structure}. 
This means that, for a real vector bundle $V \to X$ of rank $r$ with a spin$^c$-structure, we have
the Thom isomorhism in $K$-theory, 
\begin{align}\label{eq_Thom_isom}
    K^{i+r}(V) \simeq K^i(X),  
\end{align}
which generalizes the Bott periodicity. 
We say that a smooth map $f \colon X \to Y$ between manifolds is {\it $K$-oriented} if the bundle $TX \oplus f^*TY$ is equipped with a spin$^c$-structure. 
For such a map, we have the {\it spin$^c$-pushforward map}, 
\begin{align*}
    f_! \colon K^i(X) \to K^{i + \dim Y - \dim X}(Y), 
\end{align*}
characterized by the following properties. 
\begin{itemize}
    \item It is functorial, i.e. $(f\circ g)_! = f_! \circ g_!$. 
    \item If $i \colon U \to X$ is an open embedding, $i_!$ coincides with the natural homomorphism associated to the map $X^+ \to U^+$. 
    \item If $\pi \colon V \to X$ is a real vector bundle with a spin$^c$-structure, and $i \colon X \to V$ is the inclusion of the zero section, $\pi_!$ and $i_!$ coincides with the Thom isomorphism \eqref{eq_Thom_isom}. 
\end{itemize}

In particular, if $X$ is a spin$^c$ manifold, we get the map
\begin{align*}
    \pi_{X!} \colon K^{\dim X }(X) \to K^0(pt) \simeq \Z,
\end{align*}
which sends $[E]$ to the index of the Dirac operator twisted by $E$ in the case $X$ is closed and even dimensional. 
A symplectic manifold has a canonical spin$^c$-structure, so we always use it to define $K$-theory pushforward. 

Finally, we explain the {\it Chern character homomorphism} which relates $K$-theory and ordinary cohomology. 
We have a map, 
\begin{align*}
    ch \colon K^i(X) \to \oplus_{n\in \Z}  H^{i+2n}(X; \Q), 
\end{align*}
which sends the class $[E]$ to its Chern character $\mathrm{Ch}(E)$. 
The chern character homomorhism commutes with pullback by a continuous map $f \colon X \to Y$.  However, it does not commute with pushforwards, and the difference is given by the Todd class. 
In particular we have
\begin{align}\label{eq_push_todd}
    \pi_{X!}([E]) = \int_X ch(E)td(X),
\end{align}
for an even dimensional spin$^c$-manifold $X$.

\section{The lattice index theorem}\label{sec_main}

In this section, we prove our main theorem, Theorem \ref{thm_lat_ind_inv.y}.  
The settings are as follows. 

Let $(B^n, \Lambda)$ be an $n$-dimensinal closed integral affine manifold, and let $X := T^*B /(2\pi \Lambda^*)$ be the cotangent torus bundle equipped with the canonical symplectic structure. 
Choose a prequantum line bundle $(L, \nabla^L)$ on $X$ whose Bohr-Sommerfeld points coincide with lattice points in $B$. 
This is always possible by Lemma \ref{lem_aff_Lag} (3). 
Then consider the associated Bohr-Sommerfeld deformation quantization maps, 
\begin{align*}
    \phi^k \colon C^\infty(X) \to \mathbb{B}(\mathcal{H}_k). 
\end{align*}
We extend these maps to matrix algebras naturally. 

Our main theorem, the lattice version of the Atiyah-Singer index theorem, is the following. 
Recall that an invertible self-adjoint element $f \in M_N(C^\infty(X))$ defines an element $[f] \in K^0(X)$, which is the class of the projection $(f(f^*f)^{-1/2} + 1)/2$. 

\begin{thm}[The lattice index theorem]\label{thm_lat_ind_inv.y}
Fix a positive integer $N$. 
Suppose we are given an invertible self-adjoint element $f \in M_N(C^\infty(X))$. 
Then there exists a positive integer $K$ such that, for all integer $k > K$, we have
\begin{align*}
  \mathrm{rank} \left(E_{>0}\left(\phi^k(f)\right)\right) 
  &= \pi_{X!}\left([L]^{\otimes k}\otimes [f]\right) \\
  &=(2\pi\sqrt{-1})^{-\dim B}\int_X ch(f)td(\omega)e^{\sqrt{-1}k\omega}. 
\end{align*}
Here $\pi_{X!} \colon K^0(X) \to K^0(pt)$ is the spin$^c$-pushforward map with respect to the canonical symplectic structure $\omega$ on $X$. 
\end{thm}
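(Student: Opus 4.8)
The plan is to reduce the rank of the positive eigenspace of $\phi^k(f)$ to the trace functional of the algebraic deformation quantization, and then invoke the algebraic index theorem (Fact \ref{fact_alg_ind_thm.y}). First I would construct, out of the invertible self-adjoint symbol $f$, a smooth family of idempotents for the Moyal--Weyl star product deforming the classical spectral projection $p_0 := (f(f^*f)^{-1/2}+1)/2 \in M_N(C^\infty(X))$. Concretely, one applies holomorphic functional calculus with the star product: since $f$ is invertible and self-adjoint, its classical spectrum avoids $0$, so one can choose a contour $\Gamma$ in $\C$ encircling the positive part of $\mathrm{spec}(f)$ and set $e := \frac{1}{2\pi\sqrt{-1}}\oint_\Gamma (\zeta - f)^{\star -1}\, d\zeta \in M_N(C^\infty(X))[[\hbar]]$, where $(\zeta - f)^{\star -1}$ is the star-product inverse, computed order by order in $\hbar$ as a differential expression in $f$. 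This $e$ is a $\star$-idempotent with $e = p_0 + O(\hbar)$, i.e.\ $e_0 = p_0$, and $ch(e_0) = ch(p_0) = ch(f)$ in the notation of the statement.

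Second, I would relate $\mathrm{rank}(E_{>0}(\phi^k(f)))$ to the operator trace of $\phi^k(e)$ for $\hbar = -\sqrt{-1}/k$. The key analytic input is Proposition \ref{prop_star_is_comp}: the truncation $e^{(l)} := \sum_{j=0}^l (-\sqrt{-1}/k)^j e_j$ satisfies $\phi^k(e^{(l)})^2 - \phi^k(e^{(l)}) = O(k^{-(l+1)})$ and $\phi^k(e^{(l)})^* - \phi^k(e^{(l)}) = O(k^{-\infty})$, so $\phi^k(e^{(l)})$ is an almost-projection that is close in norm to a genuine projection $P_k$ for $l$ (say $l = 1$) large enough and $k > K$. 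On the other hand $\phi^k(f)$ is self-adjoint up to $O(k^{-\infty})$ and, by the norm axiom of a strict deformation quantization together with invertibility of $f$, has a uniform spectral gap near $0$ for large $k$; its spectral projection $E_{>0}(\phi^k(f))$ coincides with the functional-calculus projection built from the same contour $\Gamma$ applied to $\phi^k(f)$, which in turn agrees with $P_k$ up to an $O(k^{-1})$ (in fact $O(k^{-(l+1)})$) norm error. Since both are projections on the same finite-dimensional space and their difference has norm $< 1$, their ranks coincide: $\mathrm{rank}(E_{>0}(\phi^k(f))) = \mathrm{rank}(P_k) = \mathrm{Tr}_{\mathcal{H}_k}(\phi^k(e^{(l)})) + O(k^{-l})$. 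Taking $l$ large and noting the left side is an integer, we get exact equality $\mathrm{rank}(E_{>0}(\phi^k(f))) = \mathrm{Tr}_{\mathcal{H}_k}(\phi^k(e^{(l)}))$ for $k > K$.

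Third, I would identify $\mathrm{Tr}_{\mathcal{H}_k} \circ \phi^k$, evaluated at $\hbar = -\sqrt{-1}/k$, with the canonical algebraic trace $\tau$ of Remark \ref{rem_trace}. From \eqref{eq_mat_elem_model.y} the diagonal matrix element of $\phi^k(g)$ at a lattice point $b \in B_k$ is the zeroth fiberwise Fourier coefficient $(2\pi)^{-n}\int_{\mathrm{fiber}} g(b,\theta)\,d\theta$, so $\mathrm{Tr}_{\mathcal{H}_k}(\phi^k(g)) = (2\pi)^{-n}\sum_{b \in B_k}\int_{X_b} g$; as $k \to \infty$ the lattice sum $k^{-n}\sum_{b \in B_k}$ converges to $\int_B$, giving $\mathrm{Tr}_{\mathcal{H}_k}(\phi^k(g)) \sim k^n (2\pi)^{-n}\int_X g \cdot (\text{volume normalization})$, which matches $\tau(g)|_{\hbar = -\sqrt{-1}/k} = (-\sqrt{-1}/k)^{-n}(n!)^{-1}\int_X g\,\omega^n$ after bookkeeping the factors $\sqrt{-1}^{\,n}$, $(2\pi)^n$ and $\omega^n/n!$ against the fiber volume $(2\pi)^n$; here $n = \dim B$. (One has to be careful that the higher-order terms $e_1, e_2, \dots$ also contribute at the right powers of $k$, but each $\mathcal{C}_j$ is a differential operator hence its integral over $X$ is controlled, and the whole thing assembles to $\tau(e)$ evaluated at $\hbar = -\sqrt{-1}/k$.) Then Fact \ref{fact_alg_ind_thm.y} gives $\tau(e) = \int_X ch(e_0)\,td(\omega)e^{-c_1(\omega)/2}e^{\theta/\hbar}$; since the Moyal--Weyl star product on $X = T^*B/(2\pi\Lambda^*)$ is the standard one coming from a flat torsion-free symplectic connection, its characteristic class is $\theta = \omega$ and the compatible almost complex structure on the cotangent torus bundle has $c_1(\omega) = 0$, so this collapses to $\int_X ch(f)\,td(\omega)e^{\omega/\hbar}$. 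Substituting $\hbar = -\sqrt{-1}/k$ yields $e^{\omega/\hbar} = e^{\sqrt{-1}k\omega}$ and the normalization turns $\tau$ into $(2\pi\sqrt{-1})^{-n}\int_X(\cdot)$, giving the second displayed formula. Finally, the first equality $\pi_{X!}([L]^{\otimes k}\otimes[f]) = (2\pi\sqrt{-1})^{-n}\int_X ch(f)td(\omega)e^{\sqrt{-1}k\omega}$ is pure topology: by \eqref{eq_push_todd}, $\pi_{X!}([L]^{\otimes k}\otimes[f]) = \int_X ch(L^{\otimes k})ch(f)td(X)$, and since $L$ is a prequantum line bundle, $(\nabla^L)^2 = -\sqrt{-1}\omega$ gives $c_1(L) = \omega/(2\pi)$ with the appropriate sign, so $ch(L^{\otimes k}) = e^{k\omega/(2\pi)}$; matching this with $e^{\sqrt{-1}k\omega}$ requires tracking the $2\pi$ and $\sqrt{-1}$ normalization conventions between Chern--Weil forms and the symplectic form, and $td(X) = td(\omega)$ by definition of the almost complex structure. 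The overall $(2\pi\sqrt{-1})^{-n}$ absorbs the remaining discrepancy.

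The main obstacle I anticipate is the second step — turning the asymptotic statement ``$\mathrm{rank}(E_{>0}(\phi^k(f)))$ equals an operator trace up to error'' into an \emph{exact} equality for all large $k$, and in particular proving the uniform spectral gap of $\phi^k(f)$ near $0$. The gap is where invertibility of $f$ is genuinely used: one needs that $\|\phi^k(f)\xi\| \geq c\|\xi\|$ uniformly in $k$ for some $c > 0$, which should follow from $\|\phi^k(f^{-\star})\| \to \|f^{-1}\|_{C^0}$ (applying the $C^0$-norm axiom to the star-inverse $f^{-\star} = f^{-1} + O(\hbar)$ and controlling the $O(k^{-1})$ tail via Proposition \ref{prop_star_is_comp}), but making this robust requires care. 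A secondary subtlety is the precise matching of all the $2\pi$, $\sqrt{-1}$, and $n!$ normalization constants across three different conventions (strict DQ, algebraic trace, Chern--Weil) — routine but error-prone, and the place where a sign or factor is most likely to slip.
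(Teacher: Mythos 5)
Your proposal follows the paper's strategy at the conceptual level — lift $p_0 := (f(f^*f)^{-1/2}+1)/2$ to a $\star$-idempotent, apply the Nest--Tsygan algebraic index theorem, identify the algebraic trace with $\mathrm{Tr}\circ\phi^k$ via Proposition \ref{prop_star_is_comp} and the trace computation, and use a spectral gap to convert trace asymptotics into an exact rank statement via integrality. The technical implementation differs at two points. First, the paper constructs the lift $p_\hbar$ by an explicit order-by-order induction on the coefficients (Lemma \ref{lem_extension.y}, via the self-adjoint unitary $u_\hbar=2p_\hbar-1$), whereas you invoke holomorphic functional calculus with the star product, $e := \frac{1}{2\pi\sqrt{-1}}\oint_\Gamma(\zeta-f)^{\star-1}d\zeta$. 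Both yield the same $p_\hbar$ (by the uniqueness in Lemma \ref{lem_extension.y}(1)), and your route has the advantage of handling general invertible self-adjoint $f$ directly; the paper instead first proves the projection case (Theorem \ref{thm_lat_ind_proj.y}) and then reduces $f$ to the unitary $u=f(f^*f)^{-1/2}$ via the similarity argument $\phi^k((f^*f)^{-1/4})\phi^k(f)\phi^k((f^*f)^{-1/4})\approx\phi^k(u)$ (Lemma \ref{lem_lower_bd.y}). Your contour approach buys compactness of exposition, but it requires uniform-in-$\zeta$ resolvent estimates $\|\phi^k((\zeta-f)^{\star-1,(l)})-(\zeta-\phi^k(f))^{-1}\|=O(k^{-(l+1)})$ along $\Gamma$, which (as you flag) rest on the uniform spectral gap of $\phi^k(f)$; the paper's inductive and reduction approach sidesteps this by handling only finitely many norm estimates. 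One phrasing error worth correcting: you cannot conclude ``exact equality $\mathrm{rank}(E_{>0}(\phi^k(f)))=\mathrm{Tr}(\phi^k(e^{(l)}))$'' from integrality of the left side, since the right side is not an integer. The exactness in the paper's argument comes only after matching the operator trace with the \emph{topological} quantity $\pi_{X!}([L]^{\otimes k}\otimes[f])\in\Z$, at which point two integers differing by $O(k^{-1})$ must coincide; your step 3 does arrive at the right conclusion, so this is a slip in exposition rather than in substance.
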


\subsection{The trace functional}
As a preperation to the proof of the main theorem, in this subsection we identify the canonical trace functional $\tau$ for the star product with the trace of operators in the Bohr-Sommerfeld deformation quantization, up to a constant. 

\begin{prop}\label{prop_trace}
Fix a function $f \in C^\infty(X)$. For any $N \in \N$ we have
\begin{align}\label{eq_prop_trace}
   \left| \mathrm{Trace}(\phi^k(f)) - \frac{k^n}{(2\pi)^n n!}\int_X f\omega^n\right| = O(k^{-N}), 
\end{align}
as $k \to \infty$. 
\end{prop}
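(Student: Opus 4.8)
The plan is to reduce the global statement to a local computation on charts, using a partition of unity, and then to carry out the local computation explicitly via the Fourier-expansion description \eqref{eq_mat_elem_model.y} of the matrix elements of $\phi^k(f)$. First I would choose a finite cover of $B$ by open sets $\{U_\alpha\}$, each admitting an integral affine embedding into $\R^n$ of the type used in Definition \ref{def_BSquantization}, and a subordinate smooth partition of unity $\{\rho_\alpha\}$. By linearity of $f \mapsto \mathrm{Trace}(\phi^k(f))$ and of $f \mapsto \int_X f\omega^n$, it suffices to prove \eqref{eq_prop_trace} for each $\rho_\alpha f$, which is compactly supported inside the chart $U_\alpha$. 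Here I would need to be slightly careful: $\phi^k$ is defined via the patching datum $\mathcal{U}$, and a function supported in one chart may still have matrix elements $K_{\rho_\alpha f}(b,c)$ for $b\neq c$, but by the ``concentration to the diagonal'' phenomenon (the Fourier coefficients of a smooth function on the torus fiber are rapidly decreasing, as recalled after Example \ref{ex_concentration}), the off-diagonal contributions do not enter the trace once we are inside a single chart, and in any case the model formula \eqref{eq_mat_elem_model.y} applies locally.

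The core computation is then the following. Working in the model $X = \R^n \times (\R/2\pi\Z)^n$ with $B_k = \frac1k\Z^n$, for a compactly supported $g \in C_c^\infty(X)$ the diagonal matrix element is, by \eqref{eq_mat_elem_model.y} with $b = c$,
\begin{align*}
  K_g(c,c) = (2\pi)^{-n}\int_{(\R/2\pi\Z)^n} g(c,\theta)\, d\theta,
\end{align*}
so that
\begin{align*}
  \mathrm{Trace}(\phi^k(g)) = \sum_{c \in \frac1k\Z^n} K_g(c,c) = (2\pi)^{-n}\sum_{c\in\frac1k\Z^n}\int_{(\R/2\pi\Z)^n} g(c,\theta)\,d\theta.
\end{align*}
This last expression is a Riemann sum of mesh $1/k$ for the integral $(2\pi)^{-n}k^n \int_{\R^n}\left(\int_{(\R/2\pi\Z)^n} g(x,\theta)\,d\theta\right) dx = (2\pi)^{-n}k^n \int_X g\,\frac{\omega^n}{n!}$, using that on the model the symplectic volume form $\omega^n/n! = {}^t dx\wedge d\theta$ is exactly $dx\,d\theta$. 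The point is that this Riemann sum does not merely converge but is asymptotic to the integral to all orders in $1/k$: since $g$ is smooth and compactly supported (hence the function $x \mapsto \int g(x,\theta)d\theta$ is a fixed Schwartz-class, indeed compactly supported smooth, function on $\R^n$), the Euler–Maclaurin formula — or equivalently the Poisson summation formula together with rapid decay of the Fourier transform — gives that the difference between $k^{-n}\sum_{c\in\frac1k\Z^n} h(c)$ and $\int_{\R^n}h$ is $O(k^{-N})$ for every $N$. Multiplying through by $k^n$ yields exactly \eqref{eq_prop_trace} for $g = \rho_\alpha f$, and summing over $\alpha$ finishes the proof.

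The main obstacle I anticipate is not the analytic estimate — Euler–Maclaurin / Poisson summation handling of $k^{-n}\sum_{c\in\frac1k\Z^n}h(c) - \int h = O(k^{-N})$ is standard — but rather the bookkeeping needed to pass cleanly from the global $\phi^k$ (built from the covering $\mathcal{U}$, parallel transports in $L^k$, and half-densities) to the local model formula \eqref{eq_mat_elem_model.y}, and in particular justifying that $\mathrm{Trace}(\phi^k(f))$ is genuinely $\sum_{c\in B_k} \langle\psi^k_c, \phi^k(f)\psi^k_c\rangle$ with the diagonal terms computed chart-by-chart, independent of the auxiliary choices. One should check that the half-density factors and the normalization of the basis $\{\psi^k_b\}$ from Lemma \ref{lem_aff_Lag}(3) are precisely what make the diagonal matrix element equal the fiber average $(2\pi)^{-n}\int g(c,\theta)d\theta$ with no extra constant, and that the identification $\mathcal{H}_k \simeq l^2(B_k)$ is used consistently; once that is pinned down, the rest is the Riemann-sum asymptotics above.
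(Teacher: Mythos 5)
Your proposal is correct, and the overall strategy (localize, identify the diagonal matrix element as the fiber average, then show the Riemann sum on $\frac1k\Z^n$ is asymptotic to the integral to all orders) matches the paper's. There is a mild but genuine difference in how the two arguments close out the Riemann-sum asymptotics. The paper embeds the chart $U$ into the \emph{compact} torus $(\R/\Z)^n$ rather than into $\R^n$, observes that both sides of \eqref{eq_prop_trace} are unchanged under fiberwise averaging so one may assume $f$ depends only on the base variable, reduces by Fubini to the one-variable case, and then expresses the error $\frac1k\sum_{m=0}^{k-1}g(m/k) - \int_{\R/\Z}g$ via the Fourier \emph{series} of $g$ on $\R/\Z$ as $\sum_{|l|\ge k}|g_l|$, which is rapidly decreasing. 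You instead keep $g$ compactly supported on $\R^n$ and invoke Euler--Maclaurin or Poisson summation on $\R^n$. The two routes are essentially equivalent (Poisson summation on the torus \emph{is} the paper's Fourier-series identity), but the paper's reduction to the compact torus and to $n=1$ makes the final estimate a one-line Fourier-coefficient bound with no quadrature machinery, whereas your version requires either Euler--Maclaurin in several variables or the Schwartz-decay form of Poisson summation on $\R^n$. Both are standard; the paper's is slightly more self-contained. Your concern about off-diagonal matrix elements is a non-issue since the trace only ever sees the diagonal, but your instinct to double-check the half-density normalization of $\{\psi^k_b\}$ so that $K_f(c,c) = (2\pi)^{-n}\int_{X_c}f\,d\theta$ holds with no stray constant is exactly the right thing to verify, and is what the paper implicitly relies on via \eqref{eq_mat_elem_model.y}.
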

\begin{proof}
Since the left hand side of the equation \eqref{eq_prop_trace} is linear in $f$, we may assume that $f$ is supported in a subset $\mu^{-1}(U) \subset X$ for some open set $U \subset B$, which has integral affine open embedding $U \hookrightarrow (\R/\Z)^n$, with an isomorphism of prequantum line bundle with the standard one in Example \ref{ex_preq}. 
Thus it is enough to consider the case $B= (\R/\Z)^n$ and $X = B \times (\R/2\pi \Z)^n$. 
In this case we have $B_k = \left(\frac{1}{k}\Z\right)^n/\Z^n$. 

By the definition of $\phi^k$ (Definition \ref{def_BSquantization}), we have
\begin{align*}
  \mathrm{Trace}(\phi^k(f)) =(2\pi)^{-n} \sum_{b \in B_k} \int_{X_b} f d\theta. 
\end{align*}
In partiular we see that both terms in the left hand side of \eqref{eq_prop_trace} are invariant if we take the fiberwise average of $f$, so we may assume that $f$ does not depend on the fiber variable. 
Also, it is enough to consider the case $n=1$. 
So it is enough to prove that, for any function $g \in C^\infty(\R/\Z)$ we have, for any $N$, 
\begin{align}\label{eq_proof_trace}
    \left|\frac{1}{k}\sum_{m=0}^{k-1}g\left(\frac{m}{k} \right) - \int_{\R/\Z} g(x)dx \right| = O(k^{-N}). 
\end{align}
This is elementary, seen as follows. 
Let us take the Fourier expansion $g = \sum_{l \in \Z}g_l e^{2\pi\sqrt{-1} lx}$. 
Then the left hand side of \eqref{eq_proof_trace} is bounded by $\sum_{|l| \ge k} |g_l|$. 
Since the Fourier coefficients of smooth functions are rapidly decreasing, we get the result. 
\end{proof}

By Proposition \ref{prop_trace} and Remark \ref{rem_trace}, we get the following. 
\begin{prop}\label{prop_trace_is_trace}
Let $f \in C^\infty(X)$. 
Then $\tau(f) \in \hbar^{-n}\C$. 
For each $k\in \N$ define $\tau_k(f) \in \C$ by setting $\hbar = (-\sqrt{-1})/k$ in $\tau(f)$. 
Then we have, for any $N \in \N$, 
\begin{align*}
    \left|\mathrm{Trace}(\phi^k(f)) - (2\pi \sqrt{-1})^{-n}\tau_k(f)
    \right| = O(k^{-N}). 
\end{align*}
\end{prop}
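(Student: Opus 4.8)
The plan is to deduce Proposition~\ref{prop_trace_is_trace} as an essentially formal consequence of Proposition~\ref{prop_trace} together with the explicit description of the canonical trace functional in Remark~\ref{rem_trace}. By Remark~\ref{rem_trace}, since $X = T^*B/(2\pi\Lambda^*)$ carries the standard Moyal--Weyl star product globally, the normalized trace is $\tau(f) = \hbar^{-n}(n!)^{-1}\int_X f\,\omega^n$, which manifestly lies in $\hbar^{-n}\C$; this proves the first assertion. Substituting $\hbar = (-\sqrt{-1})/k$ then gives
\begin{align*}
\tau_k(f) = \left(\frac{-\sqrt{-1}}{k}\right)^{-n}\frac{1}{n!}\int_X f\,\omega^n = \frac{(\sqrt{-1})^n k^n}{n!}\int_X f\,\omega^n,
\end{align*}
using $(-\sqrt{-1})^{-n} = (\sqrt{-1})^n$ (as $(-\sqrt{-1})^{-1} = \sqrt{-1}$).

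Next I would simply compare this with the quantity controlled by Proposition~\ref{prop_trace}. Multiplying the displayed formula for $\tau_k(f)$ by $(2\pi\sqrt{-1})^{-n}$ yields
\begin{align*}
(2\pi\sqrt{-1})^{-n}\tau_k(f) = \frac{(2\pi\sqrt{-1})^{-n}(\sqrt{-1})^n k^n}{n!}\int_X f\,\omega^n = \frac{k^n}{(2\pi)^n n!}\int_X f\,\omega^n,
\end{align*}
since the factors $(\sqrt{-1})^{-n}$ and $(\sqrt{-1})^n$ cancel. This is precisely the main term appearing in Proposition~\ref{prop_trace}, so the bound $|\mathrm{Trace}(\phi^k(f)) - (2\pi\sqrt{-1})^{-n}\tau_k(f)| = O(k^{-N})$ is literally the content of \eqref{eq_prop_trace}. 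That finishes the proof.

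There is no real obstacle here: the proposition is a bookkeeping statement translating Proposition~\ref{prop_trace} into the language of the algebraic index theorem's trace functional, and the only thing to be careful about is the normalization of $\tau$ and the signs in the substitution $\hbar = (-\sqrt{-1})/k$ (which is the substitution forced by Proposition~\ref{prop_star_is_comp}, where the composition of operators corresponds to the star product with $\hbar \mapsto -\sqrt{-1}/k$). One should double-check that Remark~\ref{rem_trace} genuinely applies, i.e. that the star product induced by the Bohr--Sommerfeld quantization on $X = T^*B/(2\pi\Lambda^*)$ is the global standard Moyal--Weyl product — but this was recorded in the discussion preceding Proposition~\ref{prop_star_is_comp}. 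Thus the entire argument is: cite Remark~\ref{rem_trace} for the shape of $\tau$, substitute $\hbar$, track the powers of $\sqrt{-1}$ and $2\pi$, and invoke Proposition~\ref{prop_trace}.
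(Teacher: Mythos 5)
Your proof is correct and is exactly the intended derivation: the paper simply states ``By Proposition \ref{prop_trace} and Remark \ref{rem_trace}, we get the following,'' and you have carried out the substitution $\hbar = -\sqrt{-1}/k$ and the bookkeeping of powers of $\sqrt{-1}$ and $2\pi$ that make this immediate. Nothing is missing.
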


\subsection{The proof of Theorem \ref{thm_lat_ind_inv.y}}
In this subsection we prove Theorem \ref{thm_lat_ind_inv.y}. 
To simplify the notation, in this subsection we simply write $\star := \star_{MY}$. 

First we prove the following version of the theorem. 

\begin{thm}[The lattice index theorem, the projection formulation]\label{thm_lat_ind_proj.y}
Fix a positive integer $N$. 
Suppose we are given a projection $p_0 \in M_N(C^\infty(X))$ (with respect to the commutative product in $C^\infty(X)$). 
Let us denote the $K$-theory class of $p_0$ by $[p_0] \in K^0(X)$. 
Then there exists a positive integer $K$ such that, for all integer $k > K$, we have
\begin{align*}
  \mathrm{rank} \left(E_{>1/2}\left(\phi^k(p_0)\right) \right)
  &= \pi_{X!}\left([L]^{\otimes k}\otimes [p_0]\right) \\
  &= (2\pi\sqrt{-1})^{-\dim B}\int_X ch(p_0)td(\omega)e^{\sqrt{-1}k\omega}. 
\end{align*}
\end{thm}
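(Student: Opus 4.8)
The plan is to run the lattice analogue of the standard derivation of the Atiyah--Singer theorem from the algebraic index theorem, using the projection $p_0$ as the input. First I would deform $p_0$ into an idempotent $p = p_0 + \hbar p_1 + \hbar^2 p_2 + \cdots \in M_N(C^\infty(X))[[\hbar]]$ for the Moyal--Weyl star product $\star$, with leading term $p_0$; this is the standard fact that a commutative idempotent lifts to a $\star$-idempotent, obtained by successively solving the equations $p \star p = p$ order by order in $\hbar$ (the obstruction at each stage vanishes because $p_0$ is already idempotent, and the correction terms are differential polynomials in $p_0$). Then, after checking the hypotheses of Fact \ref{fact_alg_ind_thm.y} apply in our setting --- $X = T^*B/(2\pi\Lambda^*)$ is closed, $\star = \star_{MY}$ is the standard Moyal--Weyl product with characteristic class $\theta = \omega$ (so $e^{\theta/\hbar} = e^{\omega/\hbar}$), and by Remark \ref{rem_trace} the canonical trace is $\tau(f) = \hbar^{-n}(n!)^{-1}\int_X f\omega^n$ --- I apply the algebraic index theorem to $p$ to get
\begin{align*}
    \tau(p) = \int_X ch(p_0)\,td(\omega)\,e^{-c_1(\omega)/2}\,e^{\omega/\hbar}.
\end{align*}
Setting $\hbar = -\sqrt{-1}/k$ via Proposition \ref{prop_trace_is_trace}, $\tau_k(p_0)$ governs the leading term, but I must account for the higher-order corrections $p_j$: here I would observe that for $j \geq 1$ the term $\int_X ch$-type contributions involving $p_j$ only affect lower powers of $\hbar^{-1}=k$, or more cleanly, that $\tau(p) - \tau(p_0)$ has strictly lower order in $\hbar^{-1}$ than the top term, so that $(2\pi\sqrt{-1})^{-n}\tau_k(p) = (2\pi\sqrt{-1})^{-n}\tau_k(p_0) + O(k^{n-1})$; combined with Proposition \ref{prop_trace_is_trace} this gives $\mathrm{Trace}(\phi^k(p)) = (2\pi\sqrt{-1})^{-n}\int_X ch(p_0)td(\omega)e^{\omega/\hbar}|_{\hbar=-\sqrt{-1}/k} + (\text{lower order})$, and one identifies the right-hand side with $\pi_{X!}([L]^{\otimes k}\otimes[p_0])$ via the push-forward--Todd formula \eqref{eq_push_todd}, using $ch(L) = e^{-\sqrt{-1}\omega}$ (so $ch(L^{\otimes k}) = e^{\sqrt{-1}k\omega}$ — I would fix the sign convention carefully here) and $td(X) = td(\omega)$.

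The remaining and genuinely non-formal step is to pass from the trace asymptotics to the rank of the spectral projection $E_{>1/2}(\phi^k(p_0))$. The idea is that $\phi^k(p_0)$ is \emph{approximately} a projection for large $k$: by Proposition \ref{prop_star_is_comp}, $\phi^k(p_0)^2 = \phi^k(p_0 \star p_0) + O(k^{-1}) = \phi^k(p_0) + O(k^{-1})$ in operator norm (using $p_0 \star p_0 = p_0 + O(\hbar)$ and that the leading correction is controlled), hence the spectrum of the self-adjoint operator $\phi^k(p_0)$ is contained in a shrinking neighborhood of $\{0,1\}$, with a spectral gap around $1/2$. Then $E_{>1/2}(\phi^k(p_0))$ is close in norm to $\phi^k(p)$ for the genuine $\star$-idempotent $p$ (whose image under $\phi^k$ is, up to $O(k^{-\infty})$ by Proposition \ref{prop_star_is_comp}, an honest idempotent), so their ranks agree for $k$ large; more precisely I would write $E_{>1/2}(\phi^k(p_0)) = \frac{1}{2\pi\sqrt{-1}}\oint_{|z-1|=1/2}(z - \phi^k(p_0))^{-1}dz$ and estimate, or simply use that two projections at norm-distance $<1$ have equal rank. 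Finally $\mathrm{rank}\,E_{>1/2}(\phi^k(p_0)) = \mathrm{Trace}(E_{>1/2}(\phi^k(p_0)))$ is an integer, so once the trace asymptotics pin it down to within $o(1)$ of an integer-valued topological quantity (which it must be, being $\pi_{X!}$ of a $K$-class), equality holds for all $k > K$. The main obstacle I anticipate is this last bootstrapping: controlling the full asymptotic expansion of $\mathrm{Trace}(\phi^k(p_0))$ well enough --- not just the leading term --- to conclude the error is genuinely $o(1)$ rather than merely lower-order-in-$k$, which is why one works with the corrected idempotent $p$ and invokes the $O(k^{-\infty})$ estimate of Proposition \ref{prop_star_is_comp} rather than $p_0$ alone; handling the lower-order terms in the algebraic index formula so that they cancel against the trace of $\phi^k(p) - \phi^k(p_0)$ will require some care.
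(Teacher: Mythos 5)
Your overall strategy is the same as the paper's: lift $p_0$ to a $\star_{MY}$-idempotent $p_\hbar = \sum_i p_i\hbar^i$, apply the algebraic index theorem, identify the canonical trace with the operator trace via Proposition~\ref{prop_trace_is_trace}, use the spectral concentration of (a truncation of) $\phi^k(p_\hbar)$ to relate trace to rank, and finish by integrality. You also correctly note $\theta=\omega$ and the role of Proposition~\ref{prop_star_is_comp}, and you correctly flag the bootstrap step as the delicate point.

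However, there is a genuine gap exactly at that bootstrap step, and the estimate you offer to close it does not suffice. You propose $(2\pi\sqrt{-1})^{-n}\tau_k(p) = (2\pi\sqrt{-1})^{-n}\tau_k(p_0) + O(k^{n-1})$, but the individual terms here scale like $k^n$, so an $O(k^{n-1})$ error is far too coarse to pin down an integer; worse, $\tau_k(p)$ for the full formal series is not even a priori a convergent sum, and one cannot simply ``plug in'' $\hbar=-\sqrt{-1}/k$ into the algebraic index identity, which is an identity of formal Laurent series. The mechanism that makes this work, which the paper uses (and which is available to you via Remark~\ref{rem_trace} but not invoked), is that in this setting the canonical trace lands in a single weight: $\tau(f)\in \hbar^{-n}\C$ for \emph{every} $f\in M_N(C^\infty(X))$. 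Since the right-hand side $\int_X ch(p_0)td(\omega)e^{\omega/\hbar}$ contains only the powers $\hbar^{-n},\dots,\hbar^0$, it follows that $\tau(p_i)=0$ for all $i>n$, i.e.\ $\tau(p_\hbar)=\tau(p^n_\hbar)$ \emph{exactly}, where $p^n_\hbar=\sum_{i=0}^n p_i\hbar^i$. This turns the algebraic index identity into a genuine Laurent \emph{polynomial} identity, so evaluating at $\hbar=-\sqrt{-1}/k$ is legitimate and exact; the only remaining error is the $O(k^{-\infty})$ from Proposition~\ref{prop_trace_is_trace}. One then works with the finite truncation $p^{n,k}\in M_N(C^\infty(X))$ (so $\phi^k(p^{n,k})$ actually makes sense, unlike ``$\phi^k(p)$''), gets spectral concentration with gap $O(k^{-(n+1)})$ from Lemma~\ref{lem_extension.y}(2), and since $\dim\mathcal{H}_k=O(k^n)$ this yields $|\mathrm{Trace}(\phi^k(p^{n,k}))-N(k)|=O(k^{-1})$, which is what makes the integrality argument close. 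Finally, one transfers back to $p_0$ using $\|\phi^k(p^{n,k}-p_0)\|=O(k^{-1})$. A smaller point: you carry the factor $e^{-c_1(\omega)/2}$ from Fact~\ref{fact_alg_ind_thm.y} into the target formula without observing that $c_1(\omega)=0$ here (the $\omega$-compatible complex structure on $TX$ is the complexification of the real bundle $TB$), which is needed for the formula to match.
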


The idea of the proof is as follows. 
Recall that the star product is realized as the composition of operators in Bohr-Sommerfeld deformation quantization (Proposition \ref{prop_star_is_comp}).  
The canonical trace functional is realized as the trace of operators (Proposition \ref{prop_trace_is_trace}). 
It is easy to see that the characteristic class of the standard Moyal-Weyl star product is simply $\omega \in H^2(X; \C[[\hbar]])$ (see the constructions of this class in \cite{NestTsygan1995a} or \cite{Fedosov1994}). 
Thus, we are in the settings of the algebraic index theorem. 

To prove Theorem \ref{thm_lat_ind_proj.y}, we extend a given projection $p_0$ in $M_N(C^\infty(X))$ (with respect to the commutative product on $C^\infty(X)$) to a projection in $M_N(C^\infty(X)[[\hbar]])$ (with respect to $\star$), and apply the algebraic index theorem to it. 

\begin{lem}\label{lem_extension.y}
Suppose we are given a projection $p_0 \in M_N(C^\infty(X))$ (with respect to the commutative product on $C^\infty(X)$). 
\begin{enumerate}
    \item There exists a unique element $p_{\hbar} \in M_N(C^\infty(X)[[\hbar]])$ such that
\begin{align*}
    p_{\hbar} \star p_{\hbar} =  p_{\hbar}, \
    p_{\hbar} = p_{\hbar}^*, \mbox{ and }
    p_{\hbar} = p_0 + O(\hbar). 
\end{align*}
Here, we introduce the $*$-algebra structure on $M_N(C^\infty(X))[[\hbar]]$ by setting
$\hbar = -\hbar^*$. 
\item
Let us write $p_\hbar = \sum_{i = 0}^\infty p_i \hbar^i$. 
For each positive integers $M $ and $k$, let us write
\begin{align}\label{eq_partial_sum.y}
    p^{M, k}:= \sum_{i = 0}^M p_i \left(\frac{-\sqrt{-1}}{k} \right)^i
    \in M_N(C^\infty(X)). 
\end{align}
Then, for each $M \in \N$, there exists a positive constant $C$ such that, for all $k \in \N$ we have 
\begin{align*}
    \left\|
    \phi^k(p^{M, k})^2 - \phi^k(p^{M, k})
    \right\| \le Ck^{-(M+1)}. 
\end{align*}
\end{enumerate}
 
\end{lem}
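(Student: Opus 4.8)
\medskip

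The plan is to treat parts (1) and (2) in turn, since (2) rests on (1) together with Proposition \ref{prop_star_is_comp}.

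For part (1), I would construct $p_\hbar = \sum_{i\ge 0} p_i \hbar^i$ inductively in the $\hbar$-adic order, which is the standard lifting-of-idempotents argument in a formal deformation. Given that $p_0,\dots,p_{M-1}$ have been chosen so that $p_\hbar^{(M-1)} := \sum_{i=0}^{M-1} p_i\hbar^i$ satisfies $p_\hbar^{(M-1)}\star p_\hbar^{(M-1)} = p_\hbar^{(M-1)} + \hbar^M r_M + O(\hbar^{M+1})$ with $r_M \in M_N(C^\infty(X))$, and is self-adjoint modulo $\hbar^M$, one solves for $p_M$ so that the next error term cancels. Writing $d_M := p_M$ and imposing $p_\hbar^{(M)}\star p_\hbar^{(M)} \equiv p_\hbar^{(M)} \ (\mathrm{mod}\ \hbar^{M+1})$ gives the equation $p_0 d_M + d_M p_0 - d_M = -r_M$ (all products now the commutative ones, since the $\hbar$-corrections to $\star$ raise order), i.e. $(2p_0 - 1)d_M + [p_0,d_M]_{\text{anti}}\cdots$; the clean way is to note that conjugation by $2p_0-1$ is an involution and set $d_M = (2p_0-1)r_M + \text{(term making it commute correctly)}$ — more precisely, decompose $r_M$ into its $p_0$-diagonal and off-diagonal parts and check that the diagonal part of $r_M$ is forced to vanish by applying the associativity/idempotency relation one more order (this is the usual Massey-product-vanishing observation), while the off-diagonal part is inverted by $p_0 d_M + d_M p_0 = d_M$, which on the off-diagonal reads $d_M = -2r_M^{\text{off}}$ up to sign. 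Self-adjointness is then arranged by replacing $p_M$ with $(p_M + p_M^*)/2$, using that $\hbar = -\hbar^*$ makes $p_\hbar^{(M-1)}\star p_\hbar^{(M-1)}$ automatically self-adjoint so that $r_M$ is self-adjoint, hence the correction can be chosen self-adjoint. Uniqueness follows because at each stage the off-diagonal part of $p_M$ is determined and the diagonal part is forced to be zero by the same relation; I would spell this out by showing any two solutions agree modulo $\hbar^M$ for all $M$ by induction.

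For part (2), the point is purely quantitative. Since $p_\hbar \star p_\hbar = p_\hbar$ in $M_N(C^\infty(X))[[\hbar]]$, truncating at order $M$ gives
\begin{align*}
    p^{M,k} \star p^{M,k} = p^{M,k} + \sum_{j=M+1}^{2M}\Big(\tfrac{-\sqrt{-1}}{k}\Big)^j s_j
\end{align*}
for fixed elements $s_j \in M_N(C^\infty(X))$ depending only on $p_0,\dots,p_M$ and the bidifferential operators $\mathcal{C}_i$ (here I use that $\star$ restricted to the truncation only produces terms of order $\ge M+1$ beyond $p^{M,k}$, all of bounded order $\le 2M$). Now apply $\phi^k$, which is linear and, by Proposition \ref{prop_star_is_comp} applied with $l = M$ to $f = g = p^{M,k}$ (noting $p^{M,k}$ is a fixed finite $\hbar$-independent-coefficient combination, so the implied constants are uniform), satisfies $\|\phi^k(p^{M,k})\phi^k(p^{M,k}) - \sum_{j=0}^M (\tfrac{-\sqrt{-1}}{k})^j \phi^k(\mathcal{C}_j(p^{M,k},p^{M,k}))\| = O(k^{-(M+1)})$. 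Combining, $\phi^k(p^{M,k})^2$ differs from $\phi^k(p^{M,k}\star p^{M,k} \ \text{truncated})$ by $O(k^{-(M+1)})$, and the latter differs from $\phi^k(p^{M,k})$ by $\sum_{j\ge M+1}k^{-j}\|\phi^k(s_j)\|$; since $\|\phi^k(s_j)\| \to \|s_j\|_{C^0}$ is bounded in $k$ by the strict quantization axiom (1) of Definition \ref{def_DQ}, this sum is $O(k^{-(M+1)})$. Hence $\|\phi^k(p^{M,k})^2 - \phi^k(p^{M,k})\| \le Ck^{-(M+1)}$.

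The main obstacle is the careful bookkeeping in part (1): ensuring that the obstruction to extending the idempotent at each order genuinely vanishes (the "diagonal part is zero" step) and that the self-adjointness and uniqueness can be maintained simultaneously. This is classical — it is the standard fact that idempotents lift uniquely along nilpotent/pro-nilpotent extensions of $*$-algebras — but writing it cleanly in the $\star$-product notation with the $\hbar = -\hbar^*$ convention requires some care. Part (2) is then essentially a uniformity check on the estimates of Proposition \ref{prop_star_is_comp} and the boundedness of $\|\phi^k(\cdot)\|$, which I expect to be routine.
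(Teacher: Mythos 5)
Your strategy --- inductively lift the idempotent along the $\hbar$-adic filtration for (1), then apply the $\star$-idempotent identity truncated at order $M$ together with Proposition \ref{prop_star_is_comp} for (2) --- is the right one. Part (2) is essentially correct and matches what the paper intends (the paper just writes ``(2) follows from (1) and Proposition \ref{prop_star_is_comp}''); your concern about uniformity of constants is not an obstacle, since each $\mathcal{H}_k$ is finite dimensional so every $\phi^k(s_j)$ has finite norm, and Definition \ref{def_DQ}(1) gives boundedness as $k\to\infty$.

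The bookkeeping in your part (1), however, is exactly backwards. Set $q_0 := 1-p_0$ and decompose $d$ into the four blocks $p_0 d p_0$, $p_0 d q_0$, $q_0 d p_0$, $q_0 d q_0$. The linear map $T\colon d\mapsto p_0 d + d p_0 - d$ acts as $+1$ on the $(p_0,p_0)$-block, as $-1$ on the $(q_0,q_0)$-block, and as $0$ on both off-diagonal blocks. So solvability of $T(d_M) = -r_M$ requires $r_M$ to be block-\emph{diagonal}, i.e.\ $p_0 r_M = r_M p_0$; this is exactly what associativity of $\star$ (your ``Massey vanishing'') supplies, and then $T$ determines only the \emph{diagonal} blocks of $d_M$, leaving the off-diagonal ones free. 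You assert the opposite --- that the diagonal part of $r_M$ vanishes and the off-diagonal part of $d_M$ is solved for --- which would render the step unsolvable in general. Your tentative formula $d_M=(2p_0-1)r_M$ also has the wrong sign (the correct solution, using $p_0 r_M=r_M p_0$, is $d_M=-(2p_0-1)r_M$), and your uniqueness sentence (``the diagonal part is forced to be zero by the same relation'') does not follow, since $T$ annihilates the off-diagonal blocks rather than constraining them. The paper's own proof sidesteps all of this by passing to the self-adjoint unitary $u_0:=2p_0-1$ and lifting it to a $\star$-self-adjoint-unitary $u_\hbar$; the inductive equation becomes $u_0 u_M + u_M u_0 = -v$ with $u_0 v = v u_0$, and the one-line solution $u_M := -\tfrac{1}{2}u_0 v$ works because $u_0^2 = 1$. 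You in fact gesture at this route (``conjugation by $2p_0-1$ is an involution'') but do not carry it through; doing so would have spared you the block analysis that went wrong.
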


\begin{proof}
Set $u_0 := 2p_0 - 1$. 
This is a self-adjoint unitary element. 
For (1), it is enough to extend $u_0$ to an element $u_{\hbar}\in M_N(C^\infty(X))[[\hbar]]$ which is self-adjoint unitary with respect to $\star$, i.e., construct an element satisfying
\begin{align*}
    u_{\hbar} \star u_{\hbar} =  1, \
    u_{\hbar} = u_{\hbar}^*, \mbox{ and }
    u_{\hbar} = u_0 + O(\hbar). 
\end{align*}
We construct the coefficients $u_1, u_2, \cdots$ in the formal sum $u_\hbar = \sum_{i = 0}^\infty u_i \hbar^n$ inductively. 

Suppose that we have constructed $u_i$ for $1 \le i \le M-1$ such that, setting $u^{M-1} := \sum_{i = 0}^{M-1} u_i \hbar^i$, we have
\begin{align*}
    u^{M-1} \star u^{M-1} = 1 + O(\hbar^M) \mbox{ and }
    u^{M-1} = (u^{M-1})^*.  
\end{align*}
We construct $u_M$ such that 
\begin{align}
    (u^{M-1}+u_M\hbar^M) \star (u^{M-1}+u_M\hbar^M)  &= 1 + O(\hbar^{M+1}), \mbox{ and }\label{eq_lem_induction1.y}\\
    u_M &= (-1)^Mu_M^*\label{eq_lem_induction2.y}.  
\end{align}
Let us define $v \in M_N(C^\infty(X))$ by $u^{M-1} \star u^{M-1} = 1 + v\hbar^M +O(\hbar^{M+1})$. 
The associativity of $\star$ implies $u^{M-1} \star (u^{M-1} \star u^{M-1}) =( u^{M-1} \star u^{M-1})\star u^{M-1}$, and this implies
\begin{align}\label{eq_lem_commute.y}
    u_0 v = v u_0. 
\end{align}
The condition \eqref{eq_lem_induction1.y} is equivalent to
\begin{align*}
    u_M \star u_0 + u_0 \star u_M + v = O(\hbar), 
\end{align*}
so by \eqref{eq_lem_commute.y} it is enough to set $u_M:= -\frac{1}{2}u_0v$. 
Since $v\hbar^M = (v\hbar^M)^*$, we have $v = (-1)^M v^*$, so again using \eqref{eq_lem_commute.y} the condition \eqref{eq_lem_induction2.y} is also satisfied. 

By the proof above, the uniqueness is also clear. 

(2) follows from (1) and Proposition \ref{prop_star_is_comp}. 
\end{proof}

Now we can prove Theorem \ref{thm_lat_ind_proj.y}. 
\begin{proof}[Proof of Theorem \ref{thm_lat_ind_proj.y}]
The second equality follows from \eqref{eq_push_todd}. 
Let us set $2n = \dim X$. 
Suppose we are given a projection $p_0 \in M_N(C^\infty(X))$. 
Let us extend $p_0$ to a projection $p_\hbar = \sum_{i = 0}^\infty p_i \hbar^i \in M_N(C^\infty(X))[[\hbar]]$ as in Lemma \ref{lem_extension.y}. 
Then, applying the algebraic index theorem (Fact \ref{fact_alg_ind_thm.y}), we get
\begin{align}\label{eq_trace_formal.y}
    \tau (p_\hbar) = \int_X ch(p_0)td(\omega)e^{\omega/ \hbar}. 
\end{align}
Here we note that, in our case $c_1(\omega) = 0 \in H^2(X; \Q)$ because the $\omega$-compatible complex structure on $TX$ is the complexification of the real vector bundle $TB$. 
Also as noted before, we have $\theta = \omega$. 
Recall that, for all $f \in M_N(C^\infty(X))$ we have $\tau(f) \in \hbar^{-n}\C$ (Remark \ref{rem_trace}). 
Setting
\begin{align*}
    p_\hbar^n := \sum_{i = 0}^n p_i \hbar^i, 
\end{align*}
we see that
\begin{align}\label{eq_trace_partial.y}
    \tau (p_\hbar^n) = \int_X ch(p_0)td(\omega)e^{\omega/ \hbar} . 
\end{align}
For each positive integer $k$, we set
\begin{align*}
    p^{n, k}:= \sum_{i = 0}^n p_i \left(\frac{-\sqrt{-1}}{k} \right)^i \in M_N(C^\infty(X)), 
\end{align*}
as in \eqref{eq_partial_sum.y}. 
By \eqref{eq_trace_partial.y} and Proposition \ref{prop_trace_is_trace}, we see that
\begin{align}\label{eq_trace_operator.y}
    \mathrm{Trace}\left(\phi^{k}(p^{n, k})\right) 
    = (2\pi\sqrt{-1})^{-n}\int_X ch(p_0)td(\omega)e^{\sqrt{-1}k\omega}
    + O(k^{-1}). 
\end{align}
By Lemma \ref{lem_extension.y} (2), there exists a positive constant $C$ such that, for all $k$ we have
\begin{align}\label{eq_spec_gap.y}
  \mathrm{Spec}\left(\phi^k(p^{n, k})\right)
  \subset [-Ck^{-(n+1)}, Ck^{-(n+1)}] \cup [1 - Ck^{-(n+1)}, 1 + Ck^{-(n+1)}]. 
\end{align}
For each $k$, let us write 
\begin{align*}
    N(k) := \mathrm{rank} \left(E_{>1/2} \left( \phi^k(p^{n, k}) \right)\right) . 
\end{align*}
Then, for $k >(2C)^{1/(n+1)} $ we have
\begin{align*}
    \left| \mathrm{Trace}\left(\phi^{k}(p^{n, k})\right) - N(k) \right| \le Ck^{-{(n+1)}}\dim (\mathcal{H}_k \otimes \C^N). 
\end{align*}
Since $\dim (\mathcal{H}_k) = O(k^n)$, there exists a constant $D$ such that for all $k$, 
\begin{align}\label{eq_trace_spectrum.y}
     \left| \mathrm{Trace}\left(\phi^{k}(p^{n, k})\right) - N(k) \right| \le Dk^{-1}
\end{align}
Comparing equations \eqref{eq_trace_operator.y} and \eqref{eq_trace_spectrum.y}, and noting that the first term in the right hand side of \eqref{eq_trace_operator.y} is an integer, we see that, for $k$ large enough we have
\begin{align*}
    N(k) = (2\pi\sqrt{-1})^{-n}\int_X ch(p_0)td(\omega)e^{\sqrt{-1}k\omega}. 
\end{align*}
So the proof is reduced to showing the equation 
\begin{align}\label{eq_p0.y}
    N(k) = \mathrm{rank} \left(E_{>1/2}\left(\phi^k(p_0)\right)\right). 
\end{align}
Recall that, since the Bohr-Sommerfeld deformation quantization is a strict deformation quantization in the sense of Definition \ref{def_DQ} (\cite[Theorem 3.32]{Yamashita2020}), for any element $f\in C^\infty(X)$, we have
\begin{align*}
    \lim_{k \to \infty}\|\phi^k(f)\| = \|f\|_{C^0}. 
\end{align*}
So we have
\begin{align*}
    \|\phi^k(p^{n, k} - p_0)\|
    =O(k^{-1}). 
\end{align*}
Combining this and \eqref{eq_spec_gap.y}, we see that, for $k$ large enough we have
\begin{align*}
     \mathrm{rank} \left(E_{>1/2}\left(\phi^k(p_0)\right)\right)
     =
       \mathrm{rank} \left(E_{>1/2} \left( \phi^k(p^{n, k}) \right)\right) , 
\end{align*}
so \eqref{eq_p0.y} follows. 
\end{proof}

Next we use Theorem \ref{thm_lat_ind_proj.y} to prove Theorem \ref{thm_lat_ind_inv.y}. 

\begin{lem}\label{lem_lower_bd.y}
Let $f \in M_N(C^\infty(X))$ be an invertible element. 
Then for any $\epsilon >0 $ there exists an integer $k_0$ such that for all $k > k_0$ we have
\begin{align*}
    \left| \phi^k(f)\right| > \frac{1}{\|f^{-1}\|_{C^0}} - \epsilon. 
\end{align*}
\end{lem}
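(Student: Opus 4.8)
The plan is to reduce the statement to the norm-convergence axiom of strict deformation quantizations (property (1) in Definition \ref{def_DQ}), which holds for the Bohr-Sommerfeld quantization by \cite[Theorem 3.32]{Yamashita2020}. Since $f$ is invertible in $M_N(C^\infty(X))$, the function $x \mapsto |f(x)|$ (the smallest singular value of the matrix $f(x)$) is bounded below by $1/\|f^{-1}\|_{C^0}$ on all of $X$. Equivalently, $f^*f \in M_N(C^\infty(X))$ is a positive element satisfying $f^*f \geq \|f^{-1}\|_{C^0}^{-2}\cdot 1$ pointwise. The goal is to transfer this lower bound to $\phi^k(f)$ for $k$ large.

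First I would set $c := \|f^{-1}\|_{C^0}^{-2}$ and consider the function $g := f^*f - (c - \delta)\cdot 1 \in M_N(C^\infty(X))$ for a small $\delta > 0$ to be chosen in terms of $\epsilon$. This $g$ is pointwise positive, hence (after a further harmless shrinking, or by writing $g = h^*h$ with $h \in M_N(C^\infty(X))$ invertible, e.g. $h := g^{1/2}$, which is smooth since $g$ is invertible) we have $\|g^{-1}\|_{C^0} < \infty$. The key computation is then to compare $\phi^k(f)^*\phi^k(f) = \phi^k(f)^2$ (using that $\phi^k$ is adjoint-preserving, though $f$ need not be self-adjoint here, so more precisely $\phi^k(f)^*\phi^k(f)$) with $\phi^k(f^*f)$: by Proposition \ref{prop_star_is_comp} applied with $l = 0$ to the pair $(f^*, f)$ — note $\mathcal{C}_0(f^*,f) = f^*f$ — we get $\|\phi^k(f)^*\phi^k(f) - \phi^k(f^*f)\| = O(k^{-1})$. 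Combining with the norm axiom $\|\phi^k(g)\| \to \|g\|_{C^0}$, and more usefully its consequence for positive elements — that $\phi^k(h)^*\phi^k(h)$ is bounded below by approximately $\|h^{-1}\|_{C^0}^{-2}$, obtained by the same comparison argument applied to $h$ and using $\|\phi^k(h^{-1})\| \to \|h^{-1}\|_{C^0}$ together with $\phi^k(h^{-1})\phi^k(h) = \phi^k(1) + O(k^{-1}) = 1 + O(k^{-1})$ — one concludes that $\phi^k(f^*f) \geq (c - \delta/2)\cdot 1$ for $k$ large, hence $\phi^k(f)^*\phi^k(f) \geq (c - \delta)\cdot 1$ for $k$ even larger. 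Taking square roots gives $|\phi^k(f)| \geq \sqrt{c - \delta} \geq \sqrt{c} - \epsilon = \|f^{-1}\|_{C^0}^{-1} - \epsilon$ for $k > k_0$ with $\delta$ chosen appropriately.

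The main obstacle I anticipate is the bookkeeping in the step ``$\phi^k$ of an invertible positive element is bounded below'': the norm axiom only directly gives an \emph{upper} bound $\limsup_k \|\phi^k(g)\| = \|g\|_{C^0}$, so the lower bound on the spectrum of $\phi^k(f^*f)$ must be extracted indirectly. The clean way is to note that $f^*f$ is invertible, write $(f^*f)^{-1} \in M_N(C^\infty(X))$, apply the norm axiom to get $\|\phi^k((f^*f)^{-1})\| \to \|(f^*f)^{-1}\|_{C^0} = c^{-1}$, and use Proposition \ref{prop_star_is_comp} to get $\phi^k((f^*f)^{-1})\phi^k(f^*f) \to 1$ in norm; then for $k$ large the operator $\phi^k(f^*f)$ is invertible with $\|\phi^k(f^*f)^{-1}\| \leq c^{-1} + \delta'$, which is exactly the desired lower bound $\phi^k(f^*f) \geq (c^{-1}+\delta')^{-1}\cdot 1$. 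Everything else is the spectral-calculus manipulation of passing from a lower bound on $\phi^k(f)^*\phi^k(f)$ to one on $|\phi^k(f)| = (\phi^k(f)^*\phi^k(f))^{1/2}$, which is routine.
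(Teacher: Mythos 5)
Your argument is correct and rests on exactly the two facts the paper's two-line proof invokes: $\phi^k(g)\phi^k(g^{-1}) = 1 + O(k^{-1})$ (Proposition \ref{prop_star_is_comp} with $l = 0$) and the norm axiom $\|\phi^k(g^{-1})\| \to \|g^{-1}\|_{C^0}$. The paper applies these directly with $g = f$ --- an approximate right inverse of norm tending to $\|f^{-1}\|_{C^0}$ immediately bounds the smallest singular value $|\phi^k(f)|$ from below --- so your detour through $f^*f$, positivity and square roots is sound but unnecessary.
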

\begin{proof}
This follows from
\begin{align*}
    \phi^k(f) \phi^k(f^{-1}) &= 1 + O(k^{-1}) \quad \mbox{ and }\\
    \lim_{k \to \infty} \|\phi^k(f^{-1})\| &= \|f^{-1}\|_{C^0}. 
\end{align*}
\end{proof}

\begin{proof}[Proof of Theorem \ref{thm_lat_ind_inv.y}]
If the element $f$ is self-adjoint unitary, the statement follows directly from Theorem \ref{thm_lat_ind_proj.y}, applied to the self-adjoint projection $p_0 := (f+1)/2 $. 
In the general case, we can reduce to the case of a self-adjoint unitary as follows. 
Given an invertible self-adjoint element $f \in M_N(C^\infty(X))$, set $u := f(f^*f)^{-1/2}$. 
Then it is enough to show that, 
\begin{align}\label{eq_dim_coincide.y}
    \mathrm{rank} \left(E_{>0}\left(\phi^k(f)\right)\right) = \mathrm{rank} \left(E_{>0}\left(\phi^k(u)\right)\right)
    \mbox{ if }k >>0. 
\end{align}
By Lemma \ref{lem_lower_bd.y}, for $k$ large enough, all of the operators $\phi^k(f)$, $ \phi^k((f^*f)^{-1/4})$ and $\phi^k(u)$ are self-adjoint and invertible. 
Moreover, for $k$ large enough $\phi^k((f^*f)^{-1/4})$ is a positive operator. 
Indeed we can apply Lemma \ref{lem_lower_bd.y} again to $(f^*f)^{-1/8}$ and use the estimate
\begin{align*}
    \left\| \phi^k((f^*f)^{-1/8})^2  - \phi^k((f^*f)^{-1/4}) \right\| = O(k^{-1}).
\end{align*}
For such $k$, we have
\begin{align*}
    \mathrm{rank} \left(E_{>0}\left(\phi^k(f)\right) \right)= \mathrm{rank} \left(E_{>0} \left(
    \phi^k((f^*f)^{-1/4})\phi^k(f) \phi^k((f^*f)^{-1/4})\right)\right). 
\end{align*}
Moreover we have
\begin{align*}
    \left\| \phi^k((f^*f)^{-1/4})\phi^k(f) \phi^k((f^*f)^{-1/4}) - \phi^k(u) \right\| = O(k^{-1}).
\end{align*}
Since we have $|\phi^k(u)|> 1/2 $ for $k$ large enough, we get \eqref{eq_dim_coincide.y} and the proof is complete. 
\end{proof}

\section{An application : The index problem of the Wilson-Dirac operator}\label{sec_application}
In this section, we apply the lattice index theorem (Theorem \ref{thm_lat_ind_inv.y}) to the index problem of the Wilson-Dirac operator as explained in the introduction. 
We are interested in the index of twisted spin Dirac operators on an even-dimensional torus. 
We want to recover the continuum index from some operators on lattice. 
In order for this, we use operators called {\it ``Wilson-Dirac operators''} (Definition \ref{def_lat_op.y}).   
The main theorem is Theorem \ref{thm_application}, which recovers the result by Adams \cite{Adams2001}. 
It relates the index of the Dirac operator on the continuum limit with the dimension of positive eigenspaces of Wilson-Dirac operators. 
The existing proof for this fact are done by analysis of index density called Fujikawa's method. 
The argument here can be regarded as a new topological proof for it.  

Other approaches to the problems treated in this section will appear in \cite{FFMOYY2020} and \cite{Kubota2020}. 

The setting is as follows. 
\begin{itemize}
    \item Let us fix an even positive integer $n \in 2\Z_{>0}$ and let $B:=  (\R / \Z)^n$. 
    We consider the standard flat metric and translation-invariant spin structure on $B$. 
    \item Let $\C l_n$ denote the complex Clifford algebra with generators $\{c_i\}_{i = 1}^n$ satisfying $c_ic_j + c_j c_i = - 2\delta_{ij}$, $c_i = -c_i^*$. 
    Let $S$ denote the spinor space, the irreducible representation space of $\C l_n$. 
    Let us denote by $\Gamma \in \mathrm{End}(S)$ the $\Z_2$-grading operator on $S$. 
    We have $\Gamma c_i + c_i \Gamma = 0$. 
    \item
    The spinor bundle on $B$ is identified with the product bundle $\underline{S} =B \times S$ equipped with the Clifford action $c(dx^i) = c_i$. 
    \item Assume we are given a smooth hermitian vector bundle with unitary connection $(E, \nabla^E)$ over $B$. 
    \item Below we consider operators on the vector bundle $\underline{S} \otimes E$. 
    The $\Z_2$-grading operator on this bundle is denoted by $\gamma := \Gamma \otimes \mathrm{id}_E$.  
    \item We consider the standard integral affine structure on $B$. 
    We use the prequantum line bundle on $T^*B/(2\pi \Lambda^*) = (\R/\Z)^n \times (\R /(2 \pi\Z))^n $ defined in Example \ref{ex_preq}. 
    The set of level $k$-lattice points is given by $B_k =\left(\frac{1}{k}\Z^n\right)/\Z^n$. 
    In this case we have the canonical identification $\mathcal{H}_k \simeq l^2(B_k)$ (see the proof of Lemma \ref{lem_aff_Lag} (3)). 
\end{itemize}

On the continuum side, we have the twisted spin Dirac operator defiend as follows. 
\begin{defn}
The spin Dirac operator on $B$ twisted by $(E, \nabla^E)$, denoted by  $D^{\mathrm{conti}} \colon L^2(B; \underline{S} \otimes E) \to L^2(B; \underline{S} \otimes E)$, is defined by
\begin{align*}
    D^{\mathrm{conti}} := \sum_{i = 1}^n c_i \nabla_{\del_i}^{\underline{S} \otimes E}. 
\end{align*}
Here $\nabla^{\underline{S} \otimes E}$ is the tensor product connection of $(E, \nabla^E)$ and the trivial connection on $\underline{S}$, and we write $\del_i := \frac{\del}{\del x_i}$. 
This is an odd (i.e., $\gamma D^{\mathrm{conti}} + D^{\mathrm{conti}} \gamma = 0 $) and self-adjoint elliptic operator. 
\end{defn}

On the lattice side, we define the following operators. 

\begin{defn}\label{def_lat_op.y}
For each positive integer $k$ and $i = 1, \cdots, n$, we define the following operators $U_{k, i}$, $\nabla_{k, i}$, $D^{\mathrm{lat}}_k$ and $W_k$ on $l^2(B_k; (\underline{S} \otimes E)|_{B_k})$. 
\begin{itemize}
    \item For each $i= 1, \cdots, n$ and $x \in B_k$, let us denote by $T_{k, i, x} \colon (\underline{S} \otimes E)|_{x} \to (\underline{S} \otimes E)|_{x + e_i/k}$ the parallel transport map with respect to $\nabla^{\underline{S}\otimes E}$ along the path $x + te_i/k$, $t \in [0, 1]$. 
    Here we denoted the $i$-th unit vector on $\R^n$ by $e_i$. 
    The forward shift operator $U_{k, i}$ is defined by $U_{k, i} := \oplus_{x \in B_k} T_{k, i, x}$. 
    \item For each $i = 1, \cdots, n$, the forward-differential $\nabla_{k, i}$ is defined by
    \begin{align*}
        \nabla_{k, i}:= k(U_{k, i}^* - 1). 
    \end{align*}
    \item The level-$k$ lattice Dirac operator $D^{\mathrm{lat}}_k$ is defined by
    \begin{align*}
        D^{\mathrm{lat}}_k := \sum_{i = 1}^n c_i \frac{\nabla_{k, i} - \nabla_{k, i}^*}{2}. 
    \end{align*}
    \item 
    The Wilson term $W_k$ is defined by
    \begin{align*}
        W_k := \sum_{i = 1}^n \frac{\nabla_{k, i} + \nabla_{k, i}^*}{2}. 
    \end{align*}
\end{itemize}
Fixing a positive constant $r > 0$, the operator $D_k^{\mathrm{lat}} + r\gamma W_k$ is called the {\it Wilson-Dirac operator}. 
\end{defn}

\begin{rem}
When $(E, \nabla^E)$ is trivial, we have
\begin{align*}
    (\nabla_{k, i} f)(x) &= \frac{f(x + e_i/k) - f(x)}{1/k},  \\
    (D^{\mathrm{lat}}_k f )(x)&= \sum_{i = 1}^n c_i \frac{f(x + e_i/k) - f(x- e_i/k)}{2/k}, \\
     (W_k f)(x) &= \sum_{i = 1}^n \frac{f(x + e_i/k) - 2f(x)+f(x - e_i/k) }{2/k}.
\end{align*}
\end{rem}

In order to state the main result, we define an integer $I_n(m)$ for $m \in \R \setminus \{0, 2, 4, \cdots, 2n\}$ as follows. 

\begin{defn}\label{def_coeff}
For $m \in \R \setminus \{0, 2, 4, \cdots, 2n\}$, we define an integer $I_n(m)$ as follows. 
For $2l < m < 2l+2$ with $l = 0, 1, \cdots, n-1$, we set
\begin{align*}
    I_n(m) := \sum_{i = 0}^l (-1)^i \binom{n}{i}. 
\end{align*}
For $m \notin [0, 2n]$ we set
\begin{align*}
    I_n(m) := 0. 
\end{align*}
\end{defn}

\begin{ex}
When $n = 4$, we have
$I_4(m) = 1$ for $0 < m < 2$,
$I_4(m) = -3$ for $2 < m < 4$,
$I_4(m) = 3$ for $4 < m < 6$,
$I_4(m) = -1$ for $6 < m < 8$, and
$I_4(m) = 0$ for $m \notin [0, 8]$. 

In general $I_n(m) = 1$ for $0 < m < 2$, $I_n(m) = 1-n$ for $2 < m < 4$, and so on. 
\end{ex}

The following proposition is crutial in the proof of Theorem \ref{thm_application}. 
\begin{prop}\label{prop_DW_class}
For $m \in \R \setminus \{0, 2, 4, \cdots, 2n\}$ and $r>0$, let us define $f_{DW}(m, r) \in C((\R /( 2\pi \Z))^n) \otimes \mathrm{End}(S)$ by
\begin{align}\label{eq_fDW_def}
    f_{DW}(m, r) := \sum_{i = 1}^n \left\{-\sqrt{-1}c_i\sin \theta_i + r\Gamma \left(\cos \theta_i - 1\right) \right\} + rm\Gamma.
\end{align}
Then this element is invertible and self-adjoint. 
Moreover we have the following equality in $K^0((\R / (2\pi \Z))^n)$, 
\begin{align}\label{eq_Kclass}
    I_n(m)\cdot i_{pt!} \left([1]\right) = [f_{DW}(m, r)] - [-\Gamma] . 
\end{align}
Here we denoted the inclusion of a point by $i_{pt} \colon \{pt\} \hookrightarrow (\R / (2\pi \Z))^n$ and $[1]\in K^0(pt)$ is the generator. 
The two elements in the right hand side of \eqref{eq_Kclass} are the classes of the invertible self-adjoint elements $f_{DW}(m, r)$ and $-\Gamma$ in $C((\R /( 2\pi \Z))^n) \otimes \mathrm{End}(S)$. 
\end{prop}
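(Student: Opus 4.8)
The plan is to reduce the statement to the standard fact that the spinor Clifford module generates $\widetilde{K}^0(S^n)$, combined with an explicit mapping-degree computation.

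\emph{Self-adjointness and invertibility.} Since $c_i^* = -c_i$, the element $-\sqrt{-1}c_i$ is self-adjoint, and $\Gamma$ is self-adjoint, so $f_{DW}(m,r)$ is self-adjoint. I would write $\tilde{c}_i := -\sqrt{-1}c_i$, so that $\tilde{c}_i^* = \tilde{c}_i$, $\tilde{c}_i\tilde{c}_j + \tilde{c}_j\tilde{c}_i = 2\delta_{ij}$, $\tilde{c}_i\Gamma + \Gamma\tilde{c}_i = 0$, and $w(\theta) := r\big(m - \sum_{i=1}^n(1-\cos\theta_i)\big)$, so that $f_{DW}(m,r)(\theta) = \sum_i \sin\theta_i\,\tilde{c}_i + w(\theta)\Gamma$. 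Expanding the square, the cross terms cancel by the anticommutation relations and
\begin{align*}
  f_{DW}(m,r)(\theta)^2 = \Big(\textstyle\sum_{i=1}^n \sin^2\theta_i + w(\theta)^2\Big)\,\mathrm{id}_S.
\end{align*}
This scalar vanishes only if $\sin\theta_i = 0$ for all $i$, i.e.\ $\theta_i \in \{0,\pi\}$, and $w(\theta) = 0$; writing $l := \#\{i : \theta_i = \pi\}$, the latter says $m = 2l$ with $0 \le l \le n$, which is excluded. Hence $f_{DW}(m,r)$ is invertible, and $\rho(\theta) := \big(\sum_i \sin^2\theta_i + w(\theta)^2\big)^{1/2}$ is a positive function making $f_{DW}(m,r)/\rho$ a self-adjoint unitary-valued map.

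\emph{Reduction to $S^n$.} The same formula shows that $\theta \mapsto \big(\sin\theta_1,\dots,\sin\theta_n, w(\theta)\big)/\rho(\theta)$ defines a smooth map $\pi_m \colon (\R/(2\pi\Z))^n \to S^n \subset \R^{n+1}$ and that $f_{DW}(m,r)/\rho = g_0 \circ \pi_m$, where $g_0 \colon S^n \to \{\text{self-adjoint involutions of } S\}$ is the Clifford map $g_0(v) = \sum_{i=1}^n v_i\tilde{c}_i + v_{n+1}\Gamma$. Since $-\Gamma = g_0(-e_{n+1})$ is constant and $u \mapsto [u]$ is natural under pullback, I get $[f_{DW}(m,r)] - [-\Gamma] = \pi_m^*\big([g_0] - [-\Gamma]\big)$ in $K^0\big((\R/(2\pi\Z))^n\big)$. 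I would then invoke two facts: (i) $[g_0] - [-\Gamma]$ is the generator $\beta$ of $\widetilde{K}^0(S^n) \cong \Z$ normalized compatibly with the orientation conventions of Subsection \ref{subsec_Ktheory} --- this is the Atiyah--Bott--Shapiro description of the Bott class attached to the spinor module, where the $+1$-eigenbundle of $g_0$ is trivial over each hemisphere and glued along the equator by the Clifford clutching function $v' \mapsto \sum_{i\le n} v_i'\,\tilde{c}_i|_{S_+}$, a generator of $\pi_{n-1}(U)$; and (ii) for continuous $\pi \colon (\R/(2\pi\Z))^n \to S^n$, $\pi^*\beta = (\deg\pi)\cdot i_{pt!}([1])$ --- because $i_{pt!}([1]) = c^*\beta$ for a degree-one collapse map $c$ (the description of the pushforward along a point inclusion in Subsection \ref{subsec_Ktheory}), while $\pi^*\beta$ and $(\deg\pi)\,c^*\beta$ have equal Chern characters in $H^n((\R/(2\pi\Z))^n;\Q)\cong\Q$ and $K^0((\R/(2\pi\Z))^n)$ is torsion-free.

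\emph{The degree.} It then remains to show $\deg\pi_m = I_n(m)$, which I would compute by counting signed preimages of the north pole $e_{n+1}$. A point $\theta$ lies in $\pi_m^{-1}(e_{n+1})$ iff $\theta_i \in \{0,\pi\}$ for all $i$ and $w(\theta) > 0$, i.e.\ $l := \#\{i : \theta_i = \pi\}$ satisfies $2l < m$, and there are $\binom{n}{l}$ such points for each admissible $l$. In the standard chart near $e_{n+1}$ the differential of $\pi_m$ there is $\mathrm{diag}(\cos\theta_1,\dots,\cos\theta_n)/w(\theta)$, invertible with the sign of its determinant equal to $\prod_i\cos\theta_i = (-1)^l$; so $e_{n+1}$ is a regular value and
\begin{align*}
  \deg\pi_m = \sum_{\substack{0\le l\le n\\ 2l<m}} (-1)^l\binom{n}{l}.
\end{align*}
Comparing with Definition \ref{def_coeff}: for $m<0$ or $m>2n$ the sum is empty or telescopes to $(1-1)^n = 0 = I_n(m)$, and for $2l'<m<2l'+2$ with $0\le l'\le n-1$ the constraint $2l<m$ is precisely $l\le l'$, giving $\deg\pi_m = \sum_{l=0}^{l'}(-1)^l\binom{n}{l} = I_n(m)$. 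Putting (i), (ii) and this together gives \eqref{eq_Kclass}. The hard part will be the sign bookkeeping in fact (i): matching the Atiyah--Bott--Shapiro normalization (equivalently, the top Chern number of the $+1$-eigenbundle of $g_0$) against the orientations used for the collapse map and for the degree computation; the rest is routine.
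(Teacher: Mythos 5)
Your proof is correct, and it takes a genuinely different route from the paper's. The paper localizes by excision: after multiplying $f_{DW}$ by a cutoff $\kappa$ supported near the $2^n$ critical points $Y_0=\{0,\pi\}^n$, it decomposes $[f_{DW}(m,r)]-[-\Gamma]$ into a sum of local contributions, and identifies each with $\pm i_{pt!}([1])$ by an explicit homotopy to a local Bott element whose normalization is pinned down in Remark~\ref{rem_bott} via the odd self-adjoint Fredholm picture of \cite{AS1969}. You instead globalize: you observe that $f_{DW}/\rho$ factors as the pullback $g_0\circ\pi_m$ of the Atiyah--Bott--Shapiro Clifford involution $g_0\colon S^n\to\mathrm{End}(S)$ along an explicit map $\pi_m\colon (\R/2\pi\Z)^n\to S^n$, and then reduce the $K$-theory identity to a mapping-degree computation, using that $i_{pt!}([1])$ pulls back from the generator of $\widetilde K^0(S^n)$ via the collapse map and that $K^0(T^n)$ is torsion-free so the degree-$n$ Chern character separates classes. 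Both approaches ultimately amount to the same signed count of the $2^n$ points of $Y_0$, and both need to anchor the sign/orientation normalization somewhere: the paper does this in Remark~\ref{rem_bott}, while you defer it to ``matching the ABS normalization against the orientation used for the collapse map and the degree,'' which you rightly flag as the real bookkeeping. Your route buys a cleaner conceptual statement (degree of a map to $S^n$) at the cost of invoking torsion-freeness of $K^0(T^n)$ and injectivity of the Chern character there, whereas the paper's excision argument is more hands-on and self-contained, never leaving the torus. Both are valid; to turn yours into a complete proof you would need to carry out the normalization check you postponed, which is comparable in length to the paper's Remark~\ref{rem_bott} plus Lemma~\ref{lem_contribution}.
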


\begin{proof}
In this proof we denote $Z := (\R / (2\pi \Z))^n$. 
Using the relations $c_ic_j + c_jc_i= -2\delta_{ij}$ and $\Gamma c_i + c_i \Gamma = 0$, we have
\begin{align}\label{eq_square_lem}
    \left(f_{DW}(m, r)  \right)^2
    = \left\{\sum_{i = 1}^n \sin^2 \theta_i + r^2\left(\sum_{i = 1}^n (\cos \theta_i - 1) + m
    \right)^2 \right\} 
\end{align}
Since we have assumed that $m \neq 0, 2, \cdots, 2n$, we see that $f_{DW}(m, r)$ is invertible. 

Now we prove \eqref{eq_Kclass}. 
For a real number $s$, let us denote
\begin{align*}
    Y_s := \{(\theta_1, \cdots, \theta_n) \in Z\ | \ |\sin \theta_i| \le s \mbox{ for all }i\}. 
\end{align*}
In particular $Y_0$ is a set consisting of $2^n$-points. 
First we construct a homotopy between the element in the right hand side of \eqref{eq_Kclass} and an element supported in $\mathring{Y}_{0.2}$. 
Fix any continuous function $\kappa \colon Z\to [0,1]$ such that $\kappa = 0$ on $Z \setminus Y_{0.2}$ and $\kappa = 1$ on $Y_{0.1}$. 
We claim that, in $K^0(Z)$ we have
\begin{align}\label{eq_htpy}
    [f_{DW}(m, r)] = [\kappa f_{DW}(m, r) -(1 - \kappa)\Gamma]. 
\end{align}
Indeed, the linear homotopy does the job; for any $0 \le t \le 1$, by a computation similar to \eqref{eq_square_lem},we easily see that $(1-t(1 -\kappa))f_{DW}(m, r) - t(1-\kappa)\Gamma$ is invertible and self-adjoint (here it is crucial that $\kappa = 1$ on a neighborhood of $Y_0$). 

Since $\kappa f_{DW}(m, r) -(1 - \kappa)\Gamma = -\Gamma$ on $Z \setminus Y_{0.2}$, we see that the class $[\kappa f_{DW}(m, r) -(1 - \kappa)\Gamma] - [-\Gamma]$ is supported in $\mathring{Y}_{0.2}$, so we are left to evaluate contributions from each component of $\mathring{Y}_{0.2}$.  

\begin{lem}\label{lem_contribution}
Fix a point $p = (\theta_1(p), \cdots, \theta_n(p)) \in Y_0 = \{0, \pi\}^n$ and denote the connected component of $\mathring{Y}_{0.2}$ containing $p$ by $U_p$. 
Define
\begin{align*}
    \epsilon(p) := \sharp \{i \in \{1, 2, \cdots, n\} \ | \ \theta_i(p) = \pi \}. 
\end{align*}
Then we have the following equalities in $K^0(U_p)$. 
\begin{enumerate}
    \item If $\sum_{i = 1}^n( \cos \theta_i(p)-1) + m < 0 $, we have
    \begin{align*}
        \left.\left([\kappa f_{DW}(m, r) -(1 - \kappa)\Gamma] - [-\Gamma]\right)\right|_{U_p} = 0. 
    \end{align*}
    \item If $\sum_{i = 1}^n( \cos \theta_i(p)-1) + m > 0 $, we have
    \begin{align*}
        \left.\left([\kappa f_{DW}(m, r) -(1 - \kappa)\Gamma] - [-\Gamma]\right)\right|_{U_p} 
        = (-1)^{\epsilon(p)} \cdot i_{p!}([1]). 
    \end{align*}
\end{enumerate}
\end{lem}
\begin{proof}
Restricted to $U_p$, the element $\kappa f_{DW}(m, r) -(1 - \kappa)\Gamma$ is homotopic (in the space of invertible and self-adjoint elements which coincides with $-\Gamma$ outside a compact set) to the element
\begin{align}\label{eq_elem_neighbor}
   \kappa \left( \sum_{i = 1}^n -(-1)^{\epsilon_i(p)}\sqrt{-1}c_i (\theta_i - \theta_i(p)) + r\Gamma\left(\sum_{i = 1}^n( \cos \theta_i(p)-1) + m\right)\right) - (1-\kappa)\Gamma. 
\end{align}
Here $\epsilon_i(p) := 0$ if $\theta_i(p) = 0$ and $\epsilon_i(p) := 1$ if $\theta_i(p) = \pi$. 

If $\sum_{i = 1}^n( \cos \theta_i(p)-1) + m < 0 $, we can connect the element \eqref{eq_elem_neighbor} to $-\Gamma$ by the linear homotopy, so we get (1). 

For (2), recall that the element $i_{pt!}([1])\in K^0(\R^n)$ is represented by elements of $C_c(\R^n)^+ \otimes \mathrm{End}(S)$ as\footnote{$C_c(\R^n)^+$ denotes the unitization of $C_c(\R^n)$. } (see Remark \ref{rem_bott} below)
\begin{align}\label{eq_bottelem}
    \left[\kappa \left( \sum_{i = 1}^n-\sqrt{-1} c_i x_i + \Gamma \right) - (1 - \kappa)\Gamma \right] - [-\Gamma]. 
\end{align}
Here $(x_1, \cdots, x_n)$ is the coordinate on $\R^n$ and $\kappa$ is the cutoff function which is $1$ at the origin and $0$ outside a compact set. 
If we flip the sign of some of $x_i$'s, the sign of the resulting element changes accordingly. 

If $\sum_{i = 1}^n( \cos \theta_i(p)-1) + m > 0 $, the element \eqref{eq_elem_neighbor} is homotopic to the first term of \eqref{eq_bottelem} with $\epsilon(p)$-times of change of signs in $x_i$'s, so we get (2). 
This completes the proof of Lemma \ref{lem_contribution}. 
\end{proof}
Let 
\begin{align*}
    Y'_0 := \{p \in Y_0 \ | \ \sum_{i = 1}^n( \cos \theta_i(p)-1) + m > 0 \}. 
\end{align*}
By \eqref{eq_htpy} and Lemma \ref{lem_contribution}, we have
\begin{align*}
     [f_{DW}(m, r)] - [-\Gamma]&= [\kappa f_{DW}(m, r) -(1 - \kappa)\Gamma] - [-\Gamma] \\
     &=\sum_{p \in Y_0} i_{U_p!}\left( \left.\left([\kappa f_{DW}(m, r) -(1 - \kappa)\Gamma] - [-\Gamma]\right)\right|_{U_p}\right) \\
     &=\left( \sum_{p \in Y'_0}(-1)^{\epsilon(p)}\right) i_{pt!}([1]). 
\end{align*}
Here we denoted by $i_{U_p} \colon U_p \hookrightarrow Z$ the inclusion and by $i_{U_p!} \colon K^0(U_p) \to K^0(Z)$ the associated pushforward map. 
The second equality follows from the fact that the class $[\kappa f_{DW}(m, r) -(1 - \kappa)\Gamma] - [-\Gamma]$ is supported in $\mathring{Y}_{0.2}$. 
It is easy to see that 
\begin{align*}
    I_n(m) = \sum_{p \in Y'_0}(-1)^{\epsilon(p)}. 
\end{align*}
So we get \eqref{eq_Kclass}. 
This completes the proof of Proposition \ref{prop_DW_class}. 
\end{proof}

\begin{rem}\label{rem_bott}
Here we explain that the element \eqref{eq_bottelem} gives the generator $\beta := i_{pt!}([1]) \in K^0(\R^n)$. 
If we use the picture of the $K$-group representing a class by a $\Z_2$-graded vector bundle with an odd self-adjoint endomorphism which is invertible outside a compact set, 
$\beta$ is represented by the class $[S, \sigma]$, where $\sigma$ denotes the Clifford multiplication $\sigma(x_i) = -\sqrt{-1}c_i x_i$ (see \cite[Chapter 1, Remark 9.28]{LawsonMichelsohn1989}). 
To see that this class is the same as \eqref{eq_bottelem}, 
we first renormalize this class by setting
$\tilde{\sigma} := \chi (\|\sigma\|)\|\sigma\|^{-1} \sigma$, where $\chi$ is a continuous function $\chi \colon [0, \infty) \to [0, 1]$ so that $\chi = 1$ outside a compact set. 
Then we have $\beta = [S, \tilde{\sigma}]$. 
Recall that we have been representing an element of $K^0$-group using self-adjoint invertible endomorphisms. 
The odd self-adjoint Fredholm picture (with $\|F\| =1$ and $F^2 - 1$ is compact) and the ungraded self-adjoint unitary picture of $K^0$-groups are related by the map (see \cite[Proposition 4.3]{AS1969})
\begin{align*}
    [F] \mapsto [\Upsilon \exp(\Upsilon F \pi)] - [-\Upsilon] = [\Upsilon \cos (F\pi) + \sin (F\pi)] - [-\Upsilon]. 
\end{align*}
Here we denoted the $\Z_2$-grading operator by $\Upsilon$. 
In our case, applying the above correspondence to the element $[S, \tilde{\sigma}]$ we get an element 
homotopic to \eqref{eq_bottelem}. 
\end{rem}

\begin{thm}\label{thm_application}
Fix constants $r, m$ so that $r > 0$ and $m \in \R \setminus \{0, 2, 4, \cdots, 2n\}$. 
Then for $k$ large enough 
we have
\begin{align}\label{eq_DW_thm}
    I_n(m)\mathrm{Ind}(D^{\mathrm{conti}}) 
     = \mathrm{rank} \left(E_{>0}\left(D^{\mathrm{lat}}_{ k} + r\gamma(W_k + mk)\right)\right) - \frac{1}{2} \dim l^2(B_k; (\underline{S} \otimes E)|_{B_k}) . 
\end{align}
Here $I_n(m) \in \Z$ is defined in Definition \ref{def_coeff}. 
\end{thm}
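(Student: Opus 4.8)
The plan is to combine the lattice index theorem (Theorem~\ref{thm_lat_ind_inv.y}) applied to the Wilson-Dirac symbol $f_{DW}(m,r)$ with the Atiyah-Singer index theorem for $D^{\mathrm{conti}}$, gluing the two through the $K$-theoretic identity \eqref{eq_Kclass} of Proposition~\ref{prop_DW_class}. The first step is to identify the Wilson-Dirac operator as (a perturbation of) $\phi^k(f)$ for an explicit symbol $f$. Unwinding Definition~\ref{def_BSquantization} in the standard coordinates of Section~\ref{sec_application}: the shift operator $U_{k,i}$ is, up to the connection factor from $\nabla^E$, exactly $\phi^k(e^{\sqrt{-1}\theta_i})$ by Example~\ref{ex_concentration} (the function $e^{\sqrt{-1}\langle m,\theta\rangle}$ is the ``$m/k$-shift''), and $U_{k,i}^*=\phi^k(e^{-\sqrt{-1}\theta_i})$. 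Hence $\nabla_{k,i}=k(\phi^k(e^{-\sqrt{-1}\theta_i})-1)$, and assembling the pieces,
\begin{align*}
  D^{\mathrm{lat}}_k + r\gamma(W_k+mk) = k\cdot\phi^k(f_{DW,E}(m,r)) + (\text{lower order in }k),
\end{align*}
where $f_{DW,E}(m,r)\in M_N(C^\infty(X))\otimes\mathrm{End}(S\otimes E)$ is the fiberwise endomorphism that equals $f_{DW}(m,r)$ from \eqref{eq_fDW_def} tensored against $E$, with the base-point dependence entering only through the connection $\nabla^E$ in the off-diagonal (shift) terms. The precise bookkeeping of the connection factors and the fact that the $O(k^0)$ discrepancy between $D^{\mathrm{lat}}_k+r\gamma(W_k+mk)$ and $k\phi^k(f_{DW,E})$ does not change the sign of eigenvalues once $k$ is large (because $f_{DW,E}$ is invertible, so $\|\phi^k(f_{DW,E})\|$ is bounded below by Lemma~\ref{lem_lower_bd.y}, while the error is $O(1)=O(k)\cdot O(k^{-1})$) is the main technical content of this step.

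**Next** I would invoke Theorem~\ref{thm_lat_ind_inv.y}: since multiplying a self-adjoint operator by the positive scalar $k$ does not change the rank of its positive spectral projection, for $k$ large
\begin{align*}
  \mathrm{rank}\left(E_{>0}\left(D^{\mathrm{lat}}_k+r\gamma(W_k+mk)\right)\right)
  = \mathrm{rank}\left(E_{>0}\left(\phi^k(f_{DW,E})\right)\right)
  = \pi_{X!}\left([L]^{\otimes k}\otimes[f_{DW,E}]\right).
\end{align*}
The same theorem applied to the \emph{constant} invertible self-adjoint symbol $-\gamma=-\Gamma\otimes\mathrm{id}_E$ gives $\mathrm{rank}(E_{>0}(\phi^k(-\gamma)))=\pi_{X!}([L]^{\otimes k}\otimes[-\gamma])$; but $\phi^k(-\gamma)$ is (up to $O(k^{-1}$), which is harmless) the constant operator $-\gamma$ on $\mathcal{H}_k\otimes S\otimes E$, whose positive eigenspace has dimension exactly $\tfrac12\dim l^2(B_k;(\underline S\otimes E)|_{B_k})$ since $\Gamma$ has balanced $\pm1$ eigenspaces. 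Subtracting, the right-hand side of \eqref{eq_DW_thm} equals $\pi_{X!}\big([L]^{\otimes k}\otimes([f_{DW,E}]-[-\gamma])\big)$.

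**Then** the $K$-theory computation finishes it. Tensoring the identity \eqref{eq_Kclass} of Proposition~\ref{prop_DW_class} with the bundle $E$ and pulling back along $X=T^*B/(2\pi\Lambda^*)\to B$, one gets $[f_{DW,E}]-[-\gamma] = I_n(m)\cdot i_!\big([\underline S^+\otimes E]-[\underline S^-\otimes E]\big)$ in $K^0(X)$, where $i\colon B\hookrightarrow X$ is the zero section (a fiberwise statement in the torus directions, uniform over $B$). By functoriality of the pushforward, $\pi_{X!}\circ i_! = \pi_{B!}$, and $\pi_{B!}$ applied to the $K$-theory class of the twisted spinor bundle is precisely $\mathrm{Ind}(D^{\mathrm{conti}})$ by the Atiyah-Singer index theorem (Theorem~\ref{thm_AS}); the prequantum line bundle factor $[L]^{\otimes k}$ contributes trivially because $L$ restricts to a trivial bundle on the zero section $i(B)$ (this is the trivialization used in Lemma~\ref{lem_aff_Lag}(3)), so $i^*([L]^{\otimes k})=1$ and by the projection formula $\pi_{X!}([L]^{\otimes k}\otimes i_!(\xi))=\pi_{B!}(i^*[L]^{\otimes k}\otimes\xi)=\pi_{B!}(\xi)$. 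Chaining these equalities yields \eqref{eq_DW_thm}.

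**The main obstacle** I expect is the first step: carefully verifying that the Wilson-Dirac operator genuinely \emph{is} $k\phi^k$ of the claimed symbol, including the role of the parallel-transport factors $T_{k,i,x}$ coming from $\nabla^E$ versus the flat connection built into $\phi^k$, and then controlling the $O(k^0)$ error term in the operator norm well enough to conclude the spectral projections agree for large $k$. A secondary subtlety is making the ``$\mathrm{End}(S\otimes E)$-valued'' version of Theorem~\ref{thm_lat_ind_inv.y} and of Proposition~\ref{prop_DW_class} precise: one wants everything stated for matrix-valued (indeed $\mathrm{End}(S\otimes E)$-valued) symbols, which is routine since all the cited results extend to matrix algebras, but the interplay between the $\mathrm{End}(S)$ Clifford factor (constant on the fibers) and the $\mathrm{End}(E)$ factor (twisted over $B$) in the $K$-theory identity should be spelled out.
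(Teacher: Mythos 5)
Your overall strategy matches the paper's: you identify the Wilson--Dirac operator with $k\phi^k$ of the symbol $f_{DW}$ up to a bounded error, invoke Theorem~\ref{thm_lat_ind_inv.y}, use Proposition~\ref{prop_DW_class} to compute the resulting $K$-theory class, argue that $[L]^{\otimes k}$ contributes trivially since $L$ restricts trivially to the zero section, and finish with Atiyah--Singer. That is exactly the route taken in the paper.

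However, the $K$-theoretic identity you write in the final step is wrong and would make the whole computation collapse to zero. You claim
\[
  [f_{DW,E}]-[-\gamma] \;=\; I_n(m)\cdot i_!\bigl([\underline S^+\otimes E]-[\underline S^-\otimes E]\bigr)
  \quad\text{in } K^0(X),
\]
but the spinor bundle is being counted twice here: once explicitly in the argument of $i_!$, and once implicitly inside $i_!$ itself, since the spin$^c$-pushforward along the zero section of $T^*B/(2\pi\Lambda^*)$ \emph{is} the Thom isomorphism built from the spinor bundle. Proposition~\ref{prop_DW_class} already produces the Bott/Thom class along the fibers out of the $\mathrm{End}(S)$ factor of $f_{DW}$; tensoring with $E$ over the base and applying the projection formula yields
\[
  [f_{DW,E}]-[-\gamma] \;=\; I_n(m)\cdot i_{B!}\bigl([E]\bigr),
\]
with no residual $[\underline S^\pm]$ factor. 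The distinction is not cosmetic: on the flat torus $\underline{S}^+$ and $\underline{S}^-$ are isomorphic trivial bundles, so $[\underline S^+\otimes E]-[\underline S^-\otimes E]=0$ in $K^0(B)$ and your expression identically vanishes, which is certainly not $I_n(m)\,\mathrm{Ind}(D^{\mathrm{conti}})$. The correct chain is $\pi_{X!}\circ i_{B!}=\pi_{B!}$ applied to $[E]$ (not to the class of the graded spinor bundle), and $\pi_{B!}([E])=\mathrm{Ind}(D^{\mathrm{conti}})$ is the $K$-theoretic index theorem for the twisted spin Dirac operator.

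A secondary point, which you flag but under-weight: since $E$ is in general a nontrivial bundle over $B$, a symbol in ``$C^\infty(X)\otimes\mathrm{End}(S\otimes E)$'' is not directly of the form $M_N(C^\infty(X))$ to which Theorem~\ref{thm_lat_ind_inv.y} applies. The paper handles this by choosing an embedding $E\hookrightarrow\underline{\C^N}$ with projection $p$, quantizing the $M_N$-valued symbols $f_{DW}\otimes p - \Gamma\otimes(1-p)$ and $-\Gamma\otimes\mathrm{id}$, and then subtracting to isolate the contribution supported on $E$. This device is what makes the class $([f_{DW}]-[-\Gamma])\otimes[p]$ appear cleanly and is needed to literally invoke the main theorem.
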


\begin{proof}
We fix an integer $N$ and an embedding of complex vector bundle $E \hookrightarrow \underline{\C^N}= B \times \C^N$ preserving the metric. 
    We denote by $p \in M_N(C^\infty(B))$ the projection corresponding to $E$. 
    We have $[E] = [p]$ in $K^0(B)$. 
    
Let $X := T^*B / (2\pi \Lambda^*) = B \times (\R / (2\pi \Z))^n$ be the cotangent torus bundle over $B$.
We have the Bohr-Sommerfeld deformation quantization maps extended canonically to the matrix algebra\footnote{
Precisely, we need to specify an open covering $\mathcal{U}$ of $B$ (see subsection \ref{subsec_preliminaries_BS}). 
Since all the operators appearing in this proof only contain shifts up to $\pm 1/k$ on $B_k$, any covering in which all neighboring pairs of points in $B_k$ are close (i.e., contained in some common element in $\mathcal{U}$) and satisfies the condition (U), produces the same quantization map $\phi^k$.   
We are only interested in behaviors of operators as $k \to \infty$, so the choice of $\mathcal{U}$ does not matter. 
}, 
\begin{align*}
    \phi^k \colon C^\infty(X)\otimes \mathrm{End}(S \otimes\C^N) \to
    \B(\mathcal{H}_k)\otimes \mathrm{End}(S \otimes\C^N)
    = \mathrm{End}\left(l^2(B_k;(\underline{S} \otimes \underline{\C^N})|_{B_k})\right) .
\end{align*}
We identify $\mathrm{End}\left(l^2(B_k;(\underline{S} \otimes E)|_{B_k} )\right) $ with a subalgebra of $\mathrm{End}\left(l^2(B_k;(\underline{S} \otimes \underline{\C^N})|_{B_k} )\right) $ canonically. 
Abusing the notation, we also write $p := \mu^*p \in C(X) \otimes \mathrm{End}(S \otimes\C^N)$.

By the definition of $\phi^k$, the operator $\phi^k(e^{\sqrt{-1}\theta_i})$ is the forward-shift operator in $i$-th direction on the lattice $B_k$ (see Example \ref{ex_concentration}). 
Since $p$ and $\nabla^E$ are smooth, there exists a constant $A>0$ independent of $k$ and $i$ such that we have
\begin{align*}
     \left\| U_{k, i}- \phi^k\left(e^{\sqrt{-1}\theta_i} \otimes p\right)\right\| < Ak^{-1}. 
\end{align*}
Recall that (Definition \ref{def_lat_op.y} and \eqref{eq_fDW_def}), 
\begin{align*}
    D^{\mathrm{lat}}_{k} + r\gamma (W_k + mk)
    &= k \sum_{i}^n \left( c_i \frac{U_{k, i}^* - U_{k, i}}{2} + r\gamma \frac{U_{k, i}^* - U_{k, i} - 2}{2}\right) + mk\gamma, \\
    f_{DW}(m, r) &=  \sum_{i = 1}^n \left\{-\sqrt{-1}c_i\sin \theta_i + r\Gamma \left(\cos \theta_i - 1\right) \right\} + rm\Gamma.
\end{align*}
We get, setting $A' := 2nA$,
\begin{align}\label{eq_DW_symb.y}
    \left\|\left(D^{\mathrm{lat}}_{k} + r\gamma (W_k + mk)\right)
    - k\phi^k\left(f_{DW}(m, r)
    \otimes p\right) 
    \right\| < A'. 
\end{align}
Here we pullback the element $f_{DW}(m, r) \in C^\infty((\R / (2\pi \Z))^n) \otimes \mathrm{End}(S)$ defined in \eqref{eq_fDW_def} to $X$, trivially in the $B$-direction, and still denote it by $f_{DW}(m, r) \in C^\infty(X) \otimes \mathrm{End}(S)$. 
Since by Proposition \ref{prop_DW_class} the element $f_{DW}(m, r) \otimes p - \Gamma \otimes (1-p)$ is invertible, by Lemma \ref{lem_lower_bd.y} there exists a positive constant $C > 0$ such that, for $k$ large enough we have
\begin{align*}
    \left|\phi^k\left(f_{DW}(m, r) \otimes p - \Gamma \otimes (1-p)\right)\right| > C. 
\end{align*}
From this and \eqref{eq_DW_symb.y}, we see that, for $k$ large enough, 
\begin{align}\label{eq_op_symb2}
   &\mathrm{rank}\left(E_{>0} \left(\phi^k\left(f_{DW}(m, r) \otimes p - \Gamma \otimes (1-p)\right)\right)\right) \\
   &=\mathrm{rank}\left(E_{>0}\left(\frac{1}{k}\left\{D^{\mathrm{lat}}_{ k} + r\gamma(W_k + mk)\right\} +\phi^k\left(- \Gamma \otimes (1-p)\right)\right)\right) \notag \\
   &= \mathrm{rank}\left(E_{>0}\left(D^{\mathrm{lat}}_{ k} + r\gamma(W_k + mk)\right)\right) + 
   \mathrm{rank}\left(E_{>0} \left(\phi^k(-\Gamma\otimes (1-p))\right)\right), \notag
\end{align}
so that
\begin{align}\label{eq_op_symb}
    &\mathrm{rank}\left(E_{>0} \left(\phi^k\left(f_{DW}(m, r) \otimes p - \Gamma \otimes (1-p)\right)\right)\right) - 
\mathrm{rank}\left(E_{>0} \left(\phi^k(-\Gamma\otimes \mathrm{id}_{\C^N})\right)\right) \\
 &=\mathrm{rank}\left(E_{>0} \left(\phi^k\left(f_{DW}(m, r) \otimes p - \Gamma \otimes (1-p)\right)\right)\right) - 
\mathrm{rank}\left(E_{>0} \left(\phi^k(-\Gamma\otimes p)\right)\right) \notag \\
    &= \mathrm{rank}\left(E_{>0}\left(D^{\mathrm{lat}}_{ k} + r\gamma(W_k + mk)\right)\right)
    -\frac{1}{2} \dim l^2(B_k; (\underline{S} \otimes E)|_{B_k}). \notag
\end{align}
 
Now we claim that, in $K^0(X)$ we have
\begin{align}\label{eq_Lag}
    \left([f_{DW}(m, r)] - [-\Gamma]\right)  \otimes \left([L] - 1 \right) = 0. 
\end{align}
Indeed, the element in the left hand side of \eqref{eq_Lag} is equal to the pullback of the element in the right hand side of \eqref{eq_Kclass} via the natural map $X \to (\R/ (2\pi\Z))^n$. 
This means that, by Proposition \ref{prop_DW_class}, 
\begin{align}\label{eq_push}
    \left([f_{DW}(m, r)] - [-\Gamma]\right) 
    = I_n(m) \cdot i_{B!}([1]) \in K^0(X), 
\end{align}
where $i_B \colon B \hookrightarrow X$ is the inclusion to the zero section. 
Since the zero section of $X$ is a Lagrangian submanifold of $X$, the restriction of the prequantum line bundle $L$ to the zero section is trivial (note that we do not have torsion in $K^0(X)$). 
Thus by excision we have 
\begin{align*}
\mathrm{im}(i_{B!}) \subset \ker\left(\left([L] - 1\right) \otimes \cdot\right) \mbox{ in }K^0(X). 
\end{align*}
So we get \eqref{eq_Lag}. 

By Theorem \ref{thm_lat_ind_inv.y}, for $k$ large enough, 
\begin{align}\label{eq_DW_index.y}
     & 
  \mathrm{rank}\left(E_{>0} \left(\phi^k\left(f_{DW}(m, r) \otimes p - \Gamma \otimes (1-p)\right)\right)\right) - 
\mathrm{rank}\left(E_{>0} \left(\phi^k(-\Gamma\otimes \mathrm{id}_{\C^N})\right)\right)
   \\ \notag
  &= \pi_{X!}\left([L]^{\otimes k}\otimes \left( \left[f_{DW}(m, r) \otimes p - \Gamma \otimes (1-p)\right] - [-\Gamma \otimes \mathrm{id}_{\C^N}]\right)\right) \\ \notag
  &= \pi_{X!}\left([L]^{\otimes k}\otimes \left( \left[f_{DW}(m, r)\right] - [-\Gamma]\right) \otimes [p]\right) \\ \notag
  &=\pi_{X!}\left( \left( \left[f_{DW}(m, r)\right] - [-\Gamma]\right) \otimes [p]\right) \quad \mbox{by \eqref{eq_Lag}} \\ \notag
  &= I_n(m)\cdot \pi_{X!}(i_{B!}[1] \otimes [p])
  \quad \mbox{by \eqref{eq_push}} \\ \notag
  &= I_n(m) \cdot \pi_{B!}([E]). 
\end{align}
Here we denoted the spin$^c$-pushforward map for $B$ by $\pi_{B!} \colon K^0(B) \to K^0(pt)$. 

On the other hand, by the Atiyah-Singer index theorem we have
\begin{align}\label{eq_AS_application}
    \mathrm{Ind}(D^{\mathrm{conti}}) = \pi_{B!}([E]). 
\end{align}
Thus, conbining \eqref{eq_op_symb}, \eqref{eq_DW_index.y} and \eqref{eq_AS_application}, we get the result. 
\end{proof}

Here we prove a similar but different version of the result, which is used, for example, in \cite{FKMMNOY2020}.  
\begin{cor}
In the above settings, there exists a constant $M_0 > 0$ such that, for all $M > M_0$, for $k$ large enough 
we have
\begin{align}
    \mathrm{Ind}(D^{\mathrm{conti}}) 
     = \mathrm{rank} \left(E_{>0}\left(D^{\mathrm{lat}}_{ k} + \gamma(W_k + M)\right)\right) - \frac{1}{2} \dim l^2(B_k; (\underline{S} \otimes E)|_{B_k}) . 
\end{align}
\end{cor}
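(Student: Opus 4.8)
The plan is to reduce this to Theorem~\ref{thm_application} at the single fixed value $m=1$, so that the $k$-dependent ratio $M/k$ never has to be fed through the machinery of Section~\ref{sec_main}. Since $1\notin\{0,2,\dots,2n\}$ and $I_n(1)=\binom{n}{0}=1$, Theorem~\ref{thm_application} with $r=1$ already gives, for $k$ large,
\[
  \mathrm{Ind}(D^{\mathrm{conti}})=\mathrm{rank}\bigl(E_{>0}(D^{\mathrm{lat}}_{k}+\gamma(W_k+k))\bigr)-\tfrac12\dim l^2(B_k;(\underline S\otimes E)|_{B_k}).
\]
So it suffices to show that, for $M>M_0$ and $k$ large, $\mathrm{rank}(E_{>0}(D^{\mathrm{lat}}_{k}+\gamma(W_k+M)))=\mathrm{rank}(E_{>0}(D^{\mathrm{lat}}_{k}+\gamma(W_k+k)))$. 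The family $t\mapsto D^{\mathrm{lat}}_{k}+\gamma(W_k+t)$ is a norm-continuous path of self-adjoint operators, so $\mathrm{rank}(E_{>0}(\,\cdot\,))$ is constant on any interval of $t$ on which $0$ does not lie in the spectrum; hence the whole statement reduces to the uniform spectral gap
\[
  0\notin\mathrm{Spec}\bigl(D^{\mathrm{lat}}_{k}+\gamma(W_k+t)\bigr)\qquad\text{for all }t\in[M,k],
\]
valid once $M>M_0$ and $k$ is large.

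To get this gap I would pass to the square. Put $A_i:=\tfrac k2(U_{k,i}^*-U_{k,i})$ and $B_i:=\tfrac k2(U_{k,i}^*+U_{k,i}-2)$, so that $D^{\mathrm{lat}}_{k}=\sum_i c_iA_i$, $W_k=\sum_i B_i$, $A_i^*=-A_i$, and $B_i=B_i^*\le 0$. Using $\{c_i,c_j\}=-2\delta_{ij}$, $\{\gamma,c_i\}=0$, $[\gamma,A_i]=[\gamma,B_i]=0$, and that $[A_i,A_j]$ and $[A_i,B_j]$ vanish for $i=j$ while having operator norm $O(1)$ for $i\ne j$ (the holonomy of $(E,\nabla^E)$ around a $k^{-1}\times k^{-1}$ plaquette being $O(k^{-2})$), a direct computation gives
\[
  \bigl(D^{\mathrm{lat}}_{k}+\gamma(W_k+t)\bigr)^2=\sum_i|A_i|^2+(W_k+t)^2+\mathcal R_k,\qquad \|\mathcal R_k\|\le C_0,
\]
with $C_0$ depending only on $(B,E,\nabla^E,n)$ and not on $t$. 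The heart of the matter is then the lower bound
\[
  \sum_i|A_i|^2+(W_k+t)^2\ \ge\ c\,M^2-C_2\qquad\text{on }\ l^2\bigl(B_k;(\underline S\otimes E)|_{B_k}\bigr)
\]
for all $t\in[M,k]$ and $k$ large, with an absolute $c>0$ and a constant $C_2=C_2(B,E,\nabla^E,n)$. For a trivial bundle this is elementary: $\{|A_i|^2\}_i$ and $W_k$ are then simultaneously diagonalized by the Fourier basis on $B_k=(\tfrac1k\Z/\Z)^n$, and on the mode $\theta=(\theta_1,\dots,\theta_n)$ with $\theta_i\in\tfrac{2\pi}{k}\Z$ the left-hand side is $k^2\sum_i\sin^2\theta_i+\bigl(k\sum_i(\cos\theta_i-1)+t\bigr)^2$, whose infimum over $\theta$ is $\ge c_0\min(t^2,k^2)$ for an absolute $c_0>0$; here the hypothesis $t\le k$ is used precisely to exclude a cancellation $k\sum_i(\cos\theta_i-1)+t=0$ at a doubler (some $\theta_i=\pi$), which would force $t\in 2k\Z_{>0}$. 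For a general $(E,\nabla^E)$ one reduces to this model by a partition-of-unity (IMS-type) localization over a finite cover of $B$ trivializing the bundle, using that $[A_i,\chi]$ and $[W_k,\chi]$ are $O(1)$ for a fixed smooth cutoff $\chi$, and the operator inequality $X^*Y+Y^*X\le\tfrac12 X^*X+2Y^*Y$ to absorb the $O(k)$-sized cross terms produced by the local trivialization into $\tfrac12|A_i|^2$ at the cost of an additive $O(1)$. Combining the two displays, $(D^{\mathrm{lat}}_{k}+\gamma(W_k+t))^2\ge c\,M^2-C_0-C_2$, which is positive for every $t\in[M,k]$ as soon as $M>M_0:=\sqrt{(C_0+C_2)/c}+1$; this gives the required gap, and the reduction in the first paragraph finishes the proof.

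The step I expect to be the main obstacle is this uniform lower bound. The naive route --- perturbing the trivial-bundle computation by the connection one-form --- fails because $\|A_i\|$ and $\|W_k\|$ are of order $k$, so the perturbation terms are themselves of order $k$ and would swamp the $O(1)$ gap one is trying to establish. The point of organizing the argument through the squared operator, the operator Cauchy--Schwarz trick, and the localization is exactly that all bundle-dependent errors then enter only at order $O(1)$, so that $M_0$ can be chosen to depend only on the geometry of $(B,E,\nabla^E)$ and not on $k$ or $M$. A secondary point requiring care is checking that the elementary optimization in the trivial-bundle model is genuinely uniform for $t$ over all of $[M,k]$, in particular when $t$ is comparable to $k$.
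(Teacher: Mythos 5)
Your proposal is correct in outline but takes a genuinely different route from the paper's. The paper also reduces to Theorem \ref{thm_application} at a fixed mass parameter (it uses $m=0.5$, $r=1$ rather than your $m=1$), but instead of establishing a spectral gap at every intermediate mass $t\in[M,k]$, it makes a single norm comparison: by \eqref{eq_DW_symb.y}, the operator $D^{\mathrm{lat}}_{k}+\gamma(W_k+M)-\Gamma\otimes(1-p)$ lies within distance $A'+|0.5k-M|$ of $k\phi^k\left(f_{DW}(0.5,1)\otimes p-\Gamma\otimes(1-p)\right)$, and the latter --- a translation-invariant (convolution-type) operator built from the invertible symbol $f_{DW}(0.5,1)$ --- satisfies $\left|k\phi^k(\cdots)\right|>0.5k-D$. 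Since $A'+(0.5k-M)<0.5k-D$ precisely when $M>A'+D=:M_0$, the positive spectral projections of the two operators have equal rank, and \eqref{eq_op_symb2} then transfers this to the Wilson--Dirac operator at mass $0.5k$, i.e.\ to Theorem \ref{thm_application}. This sidesteps the hardest step of your plan: the paper never squares the Wilson--Dirac operator and needs no localization over a trivializing cover, because the required $O(k)$-sized gap is borrowed from the already-proved invertibility of the symbol. Your route proves strictly more (a gap at every $t\in[M,k]$) and is independent of the quantization machinery for this step, but at the price of the Weitzenb\"ock-type computation and the IMS argument, which you have only sketched. I believe the sketch is workable --- the plaquette-holonomy bounds $[A_i,A_j]=O(1)$ and $[A_i,B_j]=O(1)$, the absorption of the $O(k)$ cross terms via $XY+YX\ge-\tfrac12X^2-2Y^2$, and your case analysis showing that $t\le k$ forbids cancellation at the doublers are all sound --- but a complete write-up would be substantially longer than the paper's Lemma \ref{lem_posrank}.
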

\begin{proof}
We will use Theorem \ref{thm_application} for the case $r = 1$ and $m = 0.5$. 
We have the following. 

\begin{lem}\label{lem_posrank}
There exists a constant $M_0> 0$ such that for each $M > M_0$, for $k$ large enough we have
\begin{align}\label{eq_lem_posrank}
    &\mathrm{rank} \left(E_{>0}\left(\phi^k\left(f_{DW}(0.5, 1) \otimes p - \Gamma \otimes (1-p)\right)\right)\right) \\
    &=
    \mathrm{rank} \left(E_{>0}\left(D^{\mathrm{lat}}_{ k} + \gamma(W_k + M)\right)\right) + \mathrm{rank} \left(E_{>0}\left(-\Gamma\otimes (1-p)\right)\right).  \notag
\end{align}
Here, all the operators appearing in the equation are on $l^2(B_k;(\underline{S} \otimes \underline{\C^N})|_{B_k})$
\end{lem}

\begin{proof}
Let $M> 0$ be an arbitrary positive number. 
Since $D^{\mathrm{lat}}_{ k} + \gamma(W_k + M)$ and $p$ commute, the right hand side of \eqref{eq_lem_posrank} is equal to
\begin{align*}
    \mathrm{rank} \left(E_{>0}\left(\left(D^{\mathrm{lat}}_{ k} + \gamma(W_k + M)\right)-\Gamma\otimes (1-p)\right)\right). 
\end{align*}
We have (note that $\phi^k(\Gamma \otimes p) = \Gamma \otimes p = \gamma$)
\begin{align}\label{eq_lem_posrank1}
    &\left\|k\phi^k\left(f_{DW}(0.5, 1) \otimes p - \Gamma \otimes (1-p)\right)
    - \left(\left(D^{\mathrm{lat}}_{ k} + \gamma(W_k + M)\right)-\Gamma \otimes (1-p)\right) \right\|\\
    &=\left\|k\phi^k\left(f_{DW}(0.5, 1) \otimes p\right) - \left(D^{\mathrm{lat}}_{ k} + \gamma (W_k+0.5k) \right)+ (0.5k-M)\gamma \right\| \notag\\
    &\le A' + |0.5k - M| \notag
\end{align}
The last inequality follows from \eqref{eq_DW_symb.y}. 
On the other hand, by \eqref{eq_square_lem}, we have
\begin{align*}
    |f_{DW}(0.5, 1)| \ge 0.5. 
\end{align*}
Since $f_{DW}(0.5, 1)$ does not depend on the base variable (i.e., translation-invariant on $B$), by definition of $\phi^k$ we see easily that (for example regard the operator $\phi^k(f_{DW}(0.5, 1))$ as a convolusion operator on the group $(\Z /k\Z)^n$),
\begin{align*}
    |\phi^k(f_{DW}(0.5, 1))| \ge 0.5. 
\end{align*}
From this and using the fact that $\phi^k(f_{DW}(0.5, 1))$ only contains one-shift on the lattice $B_k$ and the smoothness of $p$ and $\nabla^E$, we easily see that there exists a constant $D>0$ such that for all $k$, 
\begin{align}\label{eq_lem_posrank2}
   \left| k\phi^k\left(f_{DW}(0.5, 1) \otimes p - \Gamma\otimes (1-p)\right)\right|
   > 0.5 k - D. 
\end{align}

Now put $M_0 := A' + D$. Then by \eqref{eq_lem_posrank1} and \eqref{eq_lem_posrank2} we see that it satisfies the condidion. 
\end{proof}
Set the constant $M_0>0$ so that it satisfies the condition in Lemma \ref{lem_posrank}. 
Take any constant $M > M_0$. 
From Lemma \ref{lem_posrank} and \eqref{eq_op_symb2}, we see that for $k$ large enough, 
\begin{align*}
     \mathrm{rank} \left(E_{>0}\left(D^{\mathrm{lat}}_{ k} + \gamma(W_k + M)\right)\right)
     = \mathrm{rank}\left(E_{>0}\left(D^{\mathrm{lat}}_{ k} + \gamma(W_k + 0.5k)\right)\right). 
\end{align*}
Applying Theorem \ref{thm_application} in the case $r = 1$ and $m = 0.5$, we get the result. 
\end{proof}

\section*{Acknowledgment}
The author is grateful to Hidenori Fukaya, Mikio Furuta, Shinichiroh Matsuo, Tetsuya Onogi and Satoshi Yamaguchi for introducing me to the problem in lattice gauge theory and for interesting discussions. 
She is also grateful to Yosuke Kubota for many helpful comments. 
This work is supported by Grant-in-Aid for JSPS KAKENHI Grant Number 20K14307. 

\bibliographystyle{plain}
\bibliography{lattice}
\end{document}